\numberwithin{equation}{section}                
\def\csname ver@etex.sty\endcsname{3000/12/31}
\let\originalleft\left
\let\originalright\right
\renewcommand{\left}{\mathopen{}\mathclose\bgroup\originalleft}
\renewcommand{\right}{\aftergroup\egroup\originalright}
\def\({\mathopen{}\left(}
\def\){\right)\mathclose{}}
\renewcommand*{\eqref}[1]{\hyperref[{#1}]{\textup{\tagform@{\ref*{#1}}}}}
\newcommand*{\eqdef}{\mathrel{\vcenter{\baselineskip0.5ex \lineskiplimit0pt\hbox{.}\hbox{.}}}=}
\newcommand*{\defeq}{=\mathrel{\vcenter{\baselineskip0.5ex \lineskiplimit0pt\hbox{.}\hbox{.}}}}
\newtheorem*{acknowledgment}{Acknowledgment}
\newtheorem{theorem}{Theorem}[section]
\newtheorem{lemma}[theorem]{Lemma}
\newtheorem{remark}[theorem]{Remark}
\newtheorem{definition}[theorem]{Definition}
\crefname{theorem}{Theorem}{Theorems}                 
\crefname{lemma}{Lemma}{Lemmas}                       
\crefname{corollary}{Corollary}{Corollaries}          
\crefname{ineq}{inequality}{inequalities}             
\crefname{diag}{diagram}{diagrams}                    
\crefname{cond}{condition}{conditions}                
\crefname{table}{Table}{Tables}                       
\crefname{hypothesis}{Hypothesis}{Hypotheses}         
\crefname{remark}{Remark}{Remarks}                    
\crefname{definition}{Definition}{Definitions}        
\crefname{conjecture}{Conjecture}{Conjectures}        
\def\id{\mathbbm{1}}
\def\cx{\mathbb{C}}
\def\bH{\mathbb{H}}
\def\rl{\mathbb{R}}
\def\bT{\mathbb{T}}
\def\Z{\mathbb{Z}}
\def\cB{\mathcal{B}}
\def\cD{\mathcal{D}}
\def\cE{\mathcal{E}}
\def\cF{\mathcal{F}}
\def\cG{\mathcal{G}}
\def\cI{\mathcal{I}}
\def\cL{\mathcal{L}}
\def\cM{\mathcal{M}}
\def\cN{\mathcal{N}}
\def\cR{\mathcal{R}}
\def\cU{\mathcal{U}}
\def\cV{\mathcal{V}}
\def\Ad{\mathrm{Ad}}
\def\coker{\mathrm{coker}}
\def\rd{\mathrm{d}}
\def\diag{\mathrm{diag}}
\def\dom{\mathrm{dom}}
\def\End{\mathrm{End}}
\def\rG{\mathrm{G}}
\def\Hom{\mathrm{Hom}}
\def\image{\mathrm{image}}
\def\index{\mathrm{index}}
\def\Re{\mathrm{Re}}
\def\S{\mathrm{S}}
\def\SO{\mathrm{SO}}
\def\Sp{\mathrm{Sp}}
\def\Spec{\mathrm{Spec}}
\def\spn{\mathrm{span}}
\def\Spin{\mathrm{Spin}}
\def\SU{\mathrm{SU}}
\def\tr{\mathrm{tr}}
\def\rU{\mathrm{U}}
\def\YMH{\mathrm{YMH}}
\def\vol{\: \mathrm{vol}}
\def\fB{\mathfrak{B}}
\def\fC{\mathfrak{C}}
\def\rd{\operatorname{d\!}{}}
\def\del{\partial}
\def\D{\slashed{D}}
\def\s{\mathfrak{s}}
\def\su{\mathfrak{su}}
\def\u{\mathfrak{u}}
\title{On the construction of monopoles with arbitrary symmetry breaking}
\date{\today}
\keywords{BPS monopoles, Nahm's equation, Nahm transform}
\subjclass[2020]{32G34, 53C07, 58D27}
\author{Benoit Charbonneau}
\address[Benoit Charbonneau]{Department of Pure Mathematics and Department of Physics and Astronomy, University of Waterloo, Ontario, Canada}
\urladdr{\href{https://www.math.uwaterloo.ca/~bcharbon/}{www.math.uwaterloo.ca/~bcharbon}}
\email{\href{mailto:benoit@alum.mit.edu}{benoit@alum.mit.edu}}
\author{\'Akos Nagy}
\address[\'Akos Nagy]{University of California, Santa Barbara}
\urladdr{\href{https://akosnagy.com/}{akosnagy.com}}
\email{\href{mailto:contact@akosnagy.com}{contact@akosnagy.com}}
\begin{document}

\begin{abstract}
   We produce finite energy BPS monopoles with arbitrary prescribed symmetry breaking from a new class of Nahm data.
\end{abstract}

\maketitle

\section{Introduction}

Gauge theory is for many of us the study of vector bundles equipped with exceptional objects, such as connections and other fields satisfying certain equations. Amongst the most classical of these objects are \emph{monopoles} on 3-manifolds, where the data given by a connection $\nabla$ with curvature $F_\nabla$ on a vector bundle $E$ and an endomorphism $\Phi$ of that bundle, called the \emph{Higgs field}, together being a solution to the \emph{monopole equation}:
\begin{equation}
   F_\nabla = \ast \nabla \Phi.
\end{equation}

The study of monopoles has been going on for quite some time, and yet, the most basic space on which to study monopoles, that is when the 3-manifold is the Euclidean space $\rl^3$, still offers many mysteries. Even when the structure group is the simplest, $\SU (2)$, there is still a lot to uncover. We certainly know a lot about the moduli space thanks to \cites{atiyah-hitchin,Corrigan1981,corrigan-goddard,DonaldsonSU2,hitchinGeodesics,hitchinMonopoles,hurtubiseNoteThmDonaldson,HurtubiseCharge2,JaffeTaubes,nakajima}, but Sen's conjectures \cite{Sen-Dyon-monopole-bound-states} that propose an understanding of the $L^2$-cohomology of the moduli space, have yet to be completely proven, although important steps in this direction have been made recently; see \cites{Fritzsch-Kottke-Singer-Monopoles-Sen-1,Kottke-Singer-partial-compactification-monopoles,Segal-Selby-cohomology-monopoles}. Understanding monopoles themselves, for instance the locations where Higgs fields are small, is the subject of the magnetic bag conjecture originally formulated by Bolognesi \cite{Bolognesi-MagneticBags} in 2006. While good progress has been made \cites{Bielawski-clusters,Evslin-Gudnason-magnetic-bags,Harland-largeN,LeeWeinberg-Monopole-Bags,Manton-Monopole-planets-galaxies,Taubes-Magnetic-bag}, it has not been resolved either.

When one cranks up the rank and is willing to consider the bigger structure group $\SU (N)$, there is a more vast field on which to play, even while we still stick to $\rl^3$ as our base manifold. This playing field was explored using multiple tools: the Nahm transform \cites{hurtubiseClassification,hurtubise_construction_1989,nahmEqn}, spectral curves \cites{hurtubise_monopoles_1990,Murray-nonabelian}, rational maps \cites{JarRatMapMpolesRadScatt,JarRatMapMpoles,JarConstrctnEuclidnMpoles,Murray-stratified,Schult-rationalmaps}. The moduli space's dimension was computed numerous times (see for instance \cites{Bielawski-asymptotic-metric-maxbreaking,Galan-monopoles-thesis,Mendizabal-HK,Moore-Royston-VandenBleeken-parameter-count-singular}) and its hyper-K\"ahler properties studied (see for instance \cites{Bielawski-asymptotic-metric-maxbreaking,Galan-monopoles-thesis,GRG97-Gibbons-Rychenkova-Goto,Mendizabal-HK}).

It is generally assumed, although not universally proven yet, that (under reasonable assumptions, such as finite energy), the Higgs field splits the bundle into eigenbundles at infinity. This splitting is called \emph{symmetry breaking}. To be more precise (and even more precision is shown in \Cref{sec:BPS}), we have in some gauge and in one direction an asymptotic behavior
\begin{equation}
   \Phi = \mu - \tfrac{1}{2 r} \kappa + O \( r^{- 2} \), \label{eq:phi_asymp}
\end{equation}
with $\mu, \kappa \in \su (N)$ satisfying $\left[ \mu, \kappa \right] = 0$. Thus, we can simultaneously diagonalize $\mu$ and $\kappa$. In the case of \emph{maximal symmetry breaking}, all eigenvalues of $\mu$ are different and the symmetry breaks from $\SU (N)$ to $\S \( \rU (1) \times \cdots \times \rU (1) \)$ at infinity, that is the bundle splits to a sum of complex line bundles, each of which are eigenbundles for both $\mu$ and $\kappa$.

\emph{Minimal symmetry breaking} occurs when there are only two eigenvalues and one of them has multiplicity $1$. In that case, the symmetry breaks from $\SU (N)$ to $\S \( \rU (N - 1) \times \rU (1) \)$. The simplest minimal symmetry breaking case is when the structure group is $\SU (3)$ and was studied in \cites{Dancer-Leese-numerical,Dancer-Leese-dynamicsSU3monopoles,Dancer-SU3monopoles,Dancer-NahmHyperKahler}; Dancer's work has been an inspiration for our desire to understand the Nahm transform for arbitrary symmetry breaking. We should note that there is not consensus about the definition for minimal symmetry breaking, and a more general version of minimal symmetry breaking is that there is only two eigenvalues, without restrictions on the multiplicity. For a thorough discussion on symmetry breaking, see \cite[Sec.~2.3]{Lang-Thesis}

In \cite{MS03}, Murray and Singer made four conjectures that are key to understanding monopoles with nonmaximal symmetry breaking and their moduli spaces. This work and its companion paper \cite{charbonneau_nahm_2023} are partially motivated by these conjectures. One should note that one of these conjectures was recently proven by Mendizabal in \cite{Mendizabal-HK}.

The heuristic of the Nahm transform is clear and well-known, originating in \cite{nahmEqn}; see also expository work including \cite{jardimSurvey}, \cite{benoitthesis}*{Chapter~2}, and \cite{FM-Nahm-book}*{Chapter~5}. In principle, one should find a direct connection between the poles of the Nahm data and the asymptotic behavior of the monopole. This analysis was circumvented in the work of Hurtubise--Murray \cite{hurtubise_construction_1989} in the case of maximal symmetry breaking for the classical groups. Their proof uses algebraic geometry and is summarized in the following diagram: 
\begin{equation}
\begin{tikzcd}
   & \text{monopoles} \arrow[ld] & \\
   \begin{array}{c} \text{spectral} \\ \text{curves} \end{array} \arrow[rr, leftrightarrow] & & \begin{array}{c} \textnormal{Nahm} \\ \textnormal{data} \end{array} \arrow[ul]
\end{tikzcd}
\end{equation}
In this paper we introduce a new class of Nahm data and use them to construct monopoles with arbitrary symmetry breaking, without passing through the spectral curve. Our methods and proofs are completely analytic. The full story is recounted in the forthcoming companion paper \cite{charbonneau_nahm_2023}, where we study the reverse direction of the transform for such monopoles.

\smallskip

Without further ado, let us dive into a cartoon picture of our result, which is expanded in full in the remainder of this paper. In the maximal symmetry breaking case, all the eigenvalues of $\mu$ are different, but in the general case they need not be so. To account for multiplicities we must adapt our notation and have distinct eigenvalues $i \lambda_1, i \lambda_2, \ldots, i \lambda_n$ of $\mu$, with $i \lambda_a$ of multiplicity $r_a$, and ordered so that $ \lambda_1 < \lambda_2 < \cdots < \lambda_n$. To each eigenvalue $i \lambda_a$ corresponds an eigen-decomposition of $\kappa$ restricted to that eigenspace, with a certain number of positive eigenvalues $k_{a, 1}^+, k_{a, 2}^+, \ldots, k_{a, r_a^+}^+$, a certain number of negative eigenvalues $- k_{a, 1}^-, - k_{a, 2}^-, \ldots, - k_{a, r_a^-}^-$, and with $r_a^0$ many zero eigenvalues. This data determines nonnegative integers $m_a$. We then look for solutions to \emph{Nahm's equation}
\begin{equation}
   \dot{T}_1 (t) = \left[ T_2 (t), T_3 (t) \right], \quad \dot{T}_2 (t) = \left[ T_3 (t), T_1 (t) \right], \quad \mbox{and} \quad \dot{T}_3 (t) = \left[ T_1 (t), T_2 (t) \right],
\end{equation}
on the intervals $\( \lambda_a, \lambda_{a + 1} \)$, with $T_\alpha$ taking values in $\u \( m_a \)$. As $t$ approaches either end of $\( \lambda_a, \lambda_{a + 1} \)$, the Nahm matrices $T_\alpha$ develop poles whose residue form representations of $\SU (2)$ that decomposes into irreducible components of dimensions given by the eigenvalues of $\kappa$. The precise behavior is explained in detail in \Cref{sec:nahm_data}.

We then use such solutions to Nahm's equation to construct compatible Sobolev spaces together with a family of nondegenerate, Dirac-type, Fredholm operators labeled by vectors $x \in \rl^3$, called the \emph{Dirac--Nahm operators}. These operators are direct generalizations of ones used in \cites{nahmEqn,hitchinMonopoles,nakajima}. Using this family we construct a monopole with arbitrary, prescribed symmetry breaking.

\smallskip

In the version of Hurtubise and Murray only one of $r_a^-, r_a^0, r_a^+$ is nonzero at each $\lambda_a$, as they study only the case of maximal symmetry breaking. Hence in that setup one either has a residue on the left or on the right (but not on both) and the corresponding representation is always irreducible. Furthermore, $r_a^0$ can only be zero or one, and in the case where $r_a^0 = 1$, the Nahm data must have a discontinuity of rank $1$. In the current scenario of arbitrary symmetry breaking, residues can occur both on left and right of the $\lambda_a$,  discontinuities of the data arises in rank $r_a^0$ which can be bigger than $1$, and all the behaviors can occur at once at any of the eigenvalues.

The minimal symmetry breaking case with structure group $\SU (3)$ was studied by Dancer and Leese in \cites{Dancer-Leese-numerical,Dancer-Leese-dynamicsSU3monopoles,Dancer-SU3monopoles,Dancer-NahmHyperKahler}, and Houghton--Weinberg \cite{Houghton-Weinberg-PRD}. Houghton--Weinberg also studied (again in \cite{Houghton-Weinberg-PRD}) the case of symmetry breaking from $\SU (N)$ to $\S \( \rU (1) \times \rU (N - 2) \times \rU (1) \)$ where the middle eigenvalues of $\mu$ are equal, the eigenvalues of $\kappa$ corresponding to the repeated eigenvalues of $\mu$ are all $i$. These setups simplify considerably the analysis and hides the actual general picture we have uncovered. Indeed, the irreducible representation of dimension $1$ of $\SU (2)$ is trivial, and thus the Nahm data has no pole at the corresponding eigenvalue.

\smallskip

Nahm data occur in the description of other gauge theoretic objects as well. Heuristically, the Nahm transform maps instantons on $\S^1 \times \rl^3$ to some sort of Nahm data on $\S^1$. This correspondence was first studied in full in \cite{benoitjacques2} for structure group $\SU (2)$, following groundwork of \cites{nye,nyesinger}. This work was extended in \cite{Takayama-NahmEqm-Quivers} to arbitrary rank.

More generally, Nahm data occur in bow diagrams, a blend of intervals (on which the Nahm data live) and quivers (on which ADHM-like data live); this framework was introduced by Cherkis in \cite{Cherkis-InstantonsGravitons}. That bow data correspond to instantons on multi-Taub--NUT spaces (which are in the next more complicated ALF 4-manifolds after $\S^1 \times \rl^3$) through the Nahm transform is the recent result of \cites{Cherkis-Larrain-Stern-1,Cherkis-Larrain-Stern-2,Cherkis-Larrain-Stern-3}, and a monad description was the topic of \cites{Cherkis-Hurtubise-monads-instantons-bows,Cherkis-Hurtubise-bows-classical}.

In all of those cases, the representations occurring as residues at the end of the Nahm intervals are irreducible. Our work opens the possibilities of exploring those cases with reducible representations, producing instantons whose behavior at infinity is more general.

\smallskip

Further on this topic, Cherkis predicts in \cite{Cherkis-octonions-monopoles-knots} a potential octonionic version of the Nahm transform. As a start, He in \cite{He-octonionic-Nahm} studies at length the moduli space of solutions to the octonionic Nahm's equation, a version obtained from dimensional reduction of the $\Spin (7)$ instanton equation instead of the anti-self-dual equation that yields the standard Nahm's equation studied in this paper. The residues of the octonionic Nahm data studied by He at the end of intervals are irreducible. Our work suggests a natural extension, and should be considered in Cherkis' program.

\smallskip

Throughout the many decades of study of monopoles, there has been a plethora of symmetric monopoles produced and studied: for structure group $\SU (2)$, for higher rank with minimal symmetry breaking, on the hyperbolic space. The ideas of this paper have already been used in \cite{Charbonneau-construction-monopoles-symmetries} to produce novel examples of spherically and axially symmetric monopoles and, in fact, characterize all such solutions under a technical assumption that has been removed recently in \cite{Lang-hyperbolic-with-symmetries}.

\medskip

\subsection*{Organization of the paper:} In \Cref{sec:BPS}, we briefly outline the gauge theoretic background for BPS monopoles on $\rl^3$. In \Cref{sec:nahm_data}, we introduce the Nahm data needed for our construction. In \Cref{sec:fredholm}, we study the Fredholm theory of the Dirac--Nahm operator and use it to construct a monopole on $\rl^3$. In \Cref{sec:main} we prove that the monopoles constructed in \Cref{sec:fredholm} have the desired symmetry breaking. Finally, in \Cref{app:SO/Sp}, we show how to use our results to construct monopoles with real orthogonal or compact symplectic structure groups and in \Cref{app:low_rank} we give examples of these novel Nahm data corresponding to low rank BPS monopoles with nonmaximal symmetry breaking.

\bigskip

\section{Monopoles}
\label{sec:BPS}

Let $E$ be a smooth, rank $N$, Hermitian vector bundle over $\rl^3$. A pair, $\( \nabla, \Phi \)$, of an $\SU (N)$ connection, $\nabla$, on $E$ and a traceless, skew-adjoint endomorphism, $\Phi$, of $E$, is a \emph{monopole} if it satisfies the \emph{(Bogomolny) monopole equation}
\begin{equation}
   F_\nabla = \ast \nabla \Phi, \label{eq:mono1}
\end{equation}
and has finite \emph{Yang--Mills--Higgs energy}
\begin{equation}
   \cE_\YMH \( \nabla, \Phi \) \eqdef \frac{1}{4 \pi} \int\limits_{\rl^3} \( \left| F_\nabla \right|^2 + \left| \nabla \Phi \right|^2 \) \vol. \label{eq:YMH_energy}
\end{equation}
Note that if \cref{eq:mono1} holds then the finiteness of $\cE_\YMH \( \nabla, \Phi \)$ is equivalent to
\begin{equation}
   |\nabla \Phi| \in L^2 \( \rl^3 \). \label{eq:mono_L2}
\end{equation}
We remark that the monopole \cref{eq:mono1} imply
\begin{equation}
   \nabla^* \nabla \Phi = 0. \label{eq:Phi_harmonic}
\end{equation}

\smallskip

\begin{remark}
   \label{remark:U(N)}
   One can, of course, consider $\rU (N)$-monopoles as well. In that case, if $\( \nabla, \Phi \)$ is a solution to \cref{eq:Phi_harmonic,eq:mono_L2}, then the pair $\( \nabla^{\det \( E \)}, \tr \( \Phi \) \)$ is a finite energy $\rU (1)$-monopole on $\det \( E \) \eqdef \bigwedge^N E$. Since then $\tr \( \Phi \) : \rl^3 \rightarrow i \rl$ is a bounded and harmonic function, it must be constant. Moreover $\nabla^{\det \( E \)}$ is gauge-equivalent to the product connection, and $\nabla$ induces a reduction of the structure group to $\SU (N)$. In particular, the pair $\( \nabla, \Phi - \tfrac{\tr \( \Phi \)}{N} \id \)$ is gauge equivalent to a finite energy $\SU (N)$-monopole.
\end{remark}

We impose asymptotic conditions on $\( \nabla, \Phi \)$, following \cite{MS03}: Let $r$ be the euclidean distance (radial coordinate) from the origin on $\rl^3$. Let $\S_\infty^2$ the ``sphere at infinity,'' that is the space of oriented lines in $\rl^3$ that pass through the origin, and equip $\S_\infty^2$ with the round metric of radius one, and let $\pi \colon \( \rl^3 - \left\{ \textnormal{o} \right\} \) \rightarrow \S_\infty^2$ be the obvious map. Let $E^\infty \rightarrow \S_\infty^2$ be a (necessarily topologically trivial) Hermitian vector bundle with structure group $\SU (N)$. We then assume that there exist
\begin{enumerate}

   \item sections $\mu, \kappa$ of $\su \( E^\infty \)$,

   \item an $\SU (N)$ connection $\nabla^\infty$ on $E^\infty$,

   \item and an isomorphism of $E|_{\rl^3 - \left\{ \textnormal{o} \right\}}$ and $\pi^* \( E^\infty \)$,

\end{enumerate}
such that, under the above isomorphism, we have on $\rl^3 - \left\{ \textnormal{o} \right\}$ that
\begin{subequations}
\begin{align}
   \nabla      &= \del_r \otimes \rd r + \nabla^\infty + \tfrac{1}{r^2} \cR_1, \label[cond]{eq:mono2} \\
   \Phi        &= \mu - \tfrac{1}{2r} \kappa + \tfrac{1}{r^2} \cR_2, \label[cond]{eq:mono3} \\
   \nabla \Phi &= \tfrac{1}{2 r^2} \kappa \otimes \rd r + \tfrac{1}{r^3} \cR_3, \label[cond]{eq:mono4}
\end{align}
\end{subequations}
for some $\cR_1$, $\cR_2$, and $\cR_3$ each satisfying
\begin{equation}
   \limsup\limits_{r \rightarrow \infty} \( \| \cR_i \|_{L^\infty \( \S_r^2 \)} + \| \nabla \cR_i \|_{L^\infty \( \S_r^2 \)} \) < \infty. \label[cond]{eq:mono5}
\end{equation}
Note that the monopole \cref{eq:mono1,eq:mono2,eq:mono3,eq:mono4,eq:mono5} imply
\begin{subequations}
\begin{align}
   \nabla^\infty \mu    &= 0, \label{eq:mu_parallel} \\
   \nabla^\infty \kappa &= 0, \label{eq:kappa_parallel} \\
   F_{\nabla^\infty}    &= \tfrac{1}{2} \kappa \otimes \vol_{\S_\infty^2}. \label{eq:curvature_at_infinity}
\end{align}
\end{subequations}
Thus
\begin{equation}
   \left[ \mu, \kappa \right] = \left[ \mu, 2 \ast_{\S_\infty^2} \( \tfrac{1}{2} \kappa \otimes \vol_{\S_\infty^2} \) \right] = - 2 \ast_{\S_\infty^2} \left[ F_{\nabla^\infty}, \mu \right] = - 2 \ast_{\S_\infty^2} \rd_{\nabla^\infty}^2 \mu = 0, \label{eq:mu_kappa_commutator}
\end{equation}
and since $|F_\nabla| = |\nabla \Phi|$, we get
\begin{align}
   \cE_\YMH \( \nabla, \Phi \)   &= \tfrac{2}{4 \pi} \int\limits_{\rl^3} \left| \nabla \Phi \right|^2 \vol \\
      &= \tfrac{1}{2 \pi} \langle \nabla^* \nabla \Phi | \Phi \rangle_{L^2 \( \rl^3 \)} + \tfrac{1}{2 \pi} \lim\limits_{r \rightarrow \infty} \langle \nabla_{\del_r} \Phi | \Phi \rangle_{L^2 \( \S_r^2 \)} \\
      &= - \tr \( \mu \kappa \). \label{eq:energymuk}
\end{align}
By \cref{eq:kappa_parallel,eq:curvature_at_infinity}, $\nabla^\infty$ is a Yang--Mills connection. Thus, by \cite{grothendieck_classification_1957}*{Theorem~2.1}, $\( E^\infty, \nabla^\infty \)$ is reducible to a direct sum of Hermitian line bundles in a way that on each line bundle the induced connection is also a Yang--Mills connection, that is, its curvature is $\tfrac{i}{2} k \otimes \vol_{\S_\infty^2}$ for some integer $k \in \Z$. The sum of these integers is necessarily zero.

There are further necessary conditions that the spectra of $\mu$ and $\kappa$ ought to satisfy (see \cite{MS03}*{Section~4}), but we leave the discussion of those to the next section.

\begin{remark}
   Given two bundles, $E_1$ and $E_2$, and monopoles  $\( \nabla_1, \Phi_1 \)$ and $\( \nabla_2, \Phi_2 \)$ on them, one can construct a new monopole on $E = E_1 \oplus E_2$ via
   \begin{equation}
      \( \nabla, \Phi \) = \( \nabla_1 \oplus \nabla_2, \Phi_1 \oplus \Phi_2 \).
   \end{equation}
   More generally, if  $\( \nabla_1, \Phi_1 \)$ and $\( \nabla_2, \Phi_2 \)$ have unitary (but not necessarily special unitary) structure groups, say $\rU \( N_1 \)$ and $\rU \( N_2 \)$, then for any $\lambda_1, \lambda_2 \in \rl$, the field configuration
   \begin{equation}
      \( \nabla, \Phi \) = \( \nabla_1 \oplus \nabla_2, \( \Phi_1 - i \lambda_1 \id_{E_1} \) \oplus \( \Phi_2 - i \lambda_2 \id_{E_2} \) \),
   \end{equation}
   is also a $\rU \( N_1 + N_2 \)$-monopole. Moreover, as in \Cref{remark:U(N)}, $\tr \( \Phi_1 \)$ and $\tr \( \Phi_2 \)$ are constants, so setting $\lambda_1 = - \tfrac{i}{N_1} \tr \( \Phi_1 \)$ and $\lambda_2 = - \tfrac{i}{N_2} \tr \( \Phi_2 \)$ yields that
   \begin{equation}
      \( \nabla, \Phi \) = \( \nabla_1 \oplus \nabla_2, \( \Phi_1 - \tfrac{1}{N_1} \tr \( \Phi_1 \) \id_{E_1} \) \oplus \( \Phi_2 - \tfrac{1}{N_2} \tr \( \Phi_2 \) \id_{E_2} \) \), \label{eq:reducible}
   \end{equation}
   is an $\SU \( N_1 + N_2 \)$-monopole. We call such a monopole \emph{reducible}.

   Note that if, say, $N_2 = 1$, then, as in \Cref{remark:U(N)}, we get that $\nabla_2$ is (gauge equivalent to) the product connection and $\Phi_2 - \tfrac{1}{N_2} \tr \( \Phi_2 \) \id_{E_2} = 0$. In this case we call $\( \nabla_2, \Phi_2 \)$ a \emph{flat factor}. Conversely, one can always add flat factors to a monopole the above way. This process creates an extra $0$ eigenvalue in $\kappa$.

   In this paper we only consider monopoles \emph{without flat factors}. Note that such monopoles can still be reducible.
\end{remark}

\bigskip

\section{Nahm data with nonmaximal symmetry breaking}
\label{sec:nahm_data}

In this section, we construct $\SU (N)$ monopoles with prescribed symmetry breaking. By \cref{eq:mono3}, the asymptotic behavior of the Higgs field is determined by a pair of elements in $\Gamma \( \su \( E^\infty \) \)$, $\mu$ and $\kappa$. By \cref{eq:mu_parallel,eq:kappa_parallel}, these elements have constant spectrum and by \cref{eq:mu_kappa_commutator}, they can be simultaneously diagonalized. Finally, by \cref{eq:curvature_at_infinity}, $i \kappa$ has only integer eigenvalues. For each eigenvalue $i \lambda \in \Spec \( \mu \)$, we then have, in some gauge and for some positive integers $k_a^\pm \geqslant k_{a + 1}^\pm$, that 
\begin{equation}
   \kappa|_{\ker \( \mu - i \lambda \id \)} = \diag \( i k_1^+, i k_2^+, \ldots, 0, \ldots, - i k_2^-, - i k_1^- \).
\end{equation}
(The number of positive/zero/negative elements can be zero.)

\smallskip

With that in mind, we define the \emph{type of symmetry breaking} as follows.

\begin{definition}
\label{definition:type}
   Let $n, N \in \Z_+$ and $n \leqslant N$. A \emph{symmetry breaking type of  size $n$ and rank $N$}, $\bT = \( \underline{\lambda}, \underline{r}, \underline{k} \)$, consists of a triple of $n$-tuples
   \begin{align}
      \textnormal{eigenvalues:}     \ \ \underline{\lambda}  &= \( \lambda_1, \lambda_2, \ldots, \lambda_n \) \in \rl^n, \\
      \textnormal{ranks:}           \ \ \underline{r}        &= \( r_1, r_2, \ldots, r_n \) \in \Z_+^n, \\
      \textnormal{Chern numbers:}   \ \ \underline{k}        &= \( \underline{k}_1, \underline{k}_2, \ldots, \underline{k}_n \) \in \Z^{r_1} \times \Z^{r_2} \times \cdots \times \Z^{r_n},
   \end{align}
   satisfying conditions enumerated below. These conditions are best formulated using additional notation. We label the entries in the $\underline{k}_a$ as
      \begin{equation}
         \underline{k}_a = \( k_{a, 1}^+, k_{a, 2}^+, \ldots, k_{a, r_a^+}^+, 0, \ldots, 0, - k_{a, r_a^-}^-,  \ldots - k_{a, 2}^-, - k_{a, 1}^- \),
      \end{equation}
   with $k^\pm_{a, 1} \geqslant k^\pm_{a, 2} \geqslant \ldots \geqslant k^\pm_{a, r_a^\pm} > 0$, and let
   \begin{equation}
      r_a^0 \eqdef r_a - \( r_a^+ + r_a^- \),
   \end{equation}
   be the number of zeros in $\underline{k}_a$. For each $a \in \{ 1, 2, \ldots, n \}$, we define
         \begin{align}
            k_a^\pm  &\eqdef \sum\limits_{b = 1}^{r_a^\pm} k_{a, b}^\pm, \\
            k_a      &\eqdef k_a^+ - k_a^-, \\
            m_a      &\eqdef - \sum\limits_{b = 1}^a k_b.
         \end{align}
   Finally, let $m_0 \eqdef 0$ and $r_0 \eqdef 0$. The conditions $\underline{\lambda}, \underline{r}, \underline{k}$ satisfy are
   \begin{enumerate}

      \item $\sum\limits_{a = 1}^n r_a = N$,

      \item $\sum\limits_{a = 1}^n \lambda_a r_a = 0$,

      \item $\lambda_1 < \lambda_2 < \cdots < \lambda_n$,

      \item  $r_1 = r_1^-$ and $r_n = r_n^+$,

      \item for each $a \in \{ 1, 2, \ldots, n - 1 \}$, $m_a \geqslant \max \( \left\{ \ r_a^0 + k_a^-, r_{a + 1}^0 + k_{a + 1}^+ \ \right\} \) \geqslant 0$, and $m_n = 0$.

   \end{enumerate}

\end{definition}

\smallskip

\begin{remark}
   The interpretation of $\bT = \( \underline{\lambda}, \underline{r}, \underline{k} \)$ is that for all $a \in \{ 1, 2, \ldots, n \}$:
   \begin{enumerate}
      
      \item The imaginary numbers $i \lambda_a$ are the distinct eigenvalues of the Higgs field at infinity.
      
      \item The nonnegative integers $r_a^+, r_a^-$, and $r_a^0$ are the number of line bundles with positive, negative, and zero Chern number in the holomorphic decomposition of the bundle corresponding to the eigenvalue $i \lambda_a$.
      
      \item For all $b \in \{ 1, 2, \ldots, r_a^\pm \}$ the integer $\pm k_{a, b}^\pm$ is the nonzero Chern number of a line bundle in the holomorphic decomposition of the bundle $E^\infty$.

   \end{enumerate}
   \Cref{skyscraper1}, a \emph{skyscraper} diagram, illustrates how these various quantities appear on the Nahm side; cf. \Cref{definition:nahm_data}.
   
   \begin{figure}[h!]
      \begin{tikzpicture}[scale=1]
         \draw[->, very thick] (-2,0) -- (8,0); 
         \draw[thick] (0,0) -- (0,4);  
         \draw[thick] (6,0) -- (6,4);  
         \draw[thick] (0,4) -- (6,4);  
         \draw[thick] (0,3) -- (-1.5,3); 
         \draw[thick] (6,4) -- (7.5,4); 
         \draw (0,2) -- +(-1,0); 
         \draw (0,2)++(-0.5,0.5) node {$k_a^+$};
         \draw[->] (0,2)++(-0.5,0.7) -- +(0,0.3);
         \draw[->] (0,2)++(-0.5,0.3) -- +(0,-0.3);
         \draw (0,2) -- +(1,0); 
         \draw (0,1.5)+(0.5,1.5) node {$k_a^-$};
         \draw[->] (0,1.5)++(0.5,1.5)++(0,0.2) -- +(0,0.8);
         \draw[->] (0,1.5)++(0.5,1.5)++(0,-0.2) -- +(0,-0.8);
         \draw (3,2) node {$m_a$};
         \draw[->] (3,2)+(0,0.2) -- +(0,2);
         \draw[->] (3,2)+(0,-0.2) -- +(0,-2);
         \draw[dashed] (-0.5,1) -- (0.5,1);
         \draw (0.3,0.5) node {$r_a^0$};
         \draw[->] (0.3,0.7) -- (0.3,1);
         \draw[->] (0.3,0.3) -- (0.3,0);
         \draw (0,0) -- (0,-0.2);
         \draw (0,-0.4) node {$\lambda_a$};
         \draw (6,0) -- (6,-0.2);
         \draw (6,-0.4) node {$\lambda_{a+1}$};
      \end{tikzpicture}
      \caption{Skyscraper diagram illustrating the various dimension at play in Nahm data in view of the constants introduced in \Cref{definition:type}}\label{skyscraper1}
   \end{figure}

\end{remark}

\smallskip

Next we define the framing of a symmetry breaking type which can be viewed as an analogue of framed gauge equivalence classes for monopoles.

\begin{definition}\label{definition:framing}
   A \emph{framing of a symmetry breaking type $\bT$}, is an $\( n - 1 \)$-tuple $\cF = \( C_a \)_{a = 2}^{n - 1}$ such that, for all $a \in \{ 2, \ldots, n - 1 \}$, we have that $C_a \in \Hom \( \cx^{m_{a - 1}}, \cx^{m_a} \)$, and if
   \begin{equation}
      V_{a - 1}^+ \eqdef \ker \( C_a \) \subseteq \cx^{m_{a - 1}} \quad \& \quad V_a^- \eqdef \( \image \( C_a \) \)^\perp \subseteq \cx^{m_a},
   \end{equation}
   then $\dim_\cx \( V_a^- \) = k_a^-$ and $\dim_\cx \( V_a^+ \) = k_{a + 1}^+$ and the maps $C_a\big|_{\( V_{a - 1}^+ \)^\perp} \colon \( V_{a - 1}^+ \)^\perp \rightarrow \( V_a^- \)^\perp$ are unitary isomorphisms.
\end{definition}

\smallskip

\begin{remark}
   \label{remark:end_points}
   Note that $V_1^- = \cx^{k_1^-} = \cx^{m_1}$ and $V_{n - 1}^+ = \cx^{k_n^+} = \cx^{m_{n - 1}}$.
\end{remark}

\smallskip

Let $\bH = \cx \oplus \cx j$ be the field of quaternions and $\sigma_1, \sigma_2, \sigma_3$ be the usual Pauli operators, that is, for all $q = z_1 + z_2 j \in \bH$, we have
\begin{align}
   \sigma_1 \( q \) \eqdef q i = i z_1 - i z_2 j, \quad \sigma_2 \( q \) \eqdef q j = - z_2 + z_1 j, \quad \& \quad \sigma_3 \( q \) \eqdef q k = q i j = i z_2 - i z_1 j.
\end{align}
Note that for all $\alpha, \beta \in \{ 1, 2, 3 \}$, we have $\sigma_\alpha \circ \sigma_\beta = - \delta_{\alpha, \beta} + \sum_{\gamma = 1}^3 \epsilon_{\alpha, \beta, \gamma} \sigma_\gamma$. In particular, $\left[ \sigma_\alpha, \sigma_\beta \right] = \sum_{\gamma = 1}^3 \epsilon_{\alpha, \beta, \gamma} \sigma_\gamma$, thus we can identify $\su (2)$ with $\spn_\rl \( \{ \sigma_1, \sigma_2, \sigma_3 \} \)$. With this identification, $\bH$ becomes the defining representation of $\su (2)$.

We consider $\bH$ as a complex \emph{left} module, that is for all $z \in \cx$ and $q = z_1 + z_2 j \in \bH$, we have
\begin{equation}
   z \cdot q \eqdef z z_1 + z z_2 j.
\end{equation}
Finally, let
\begin{equation}
   J \colon \bH \rightarrow \bH ; \ni z_1 + z_2 j \mapsto J \( z_1 + z_2 j \) \eqdef j \( z_1 + z_2 j \) = - \overline{z}_2 + \overline{z}_1 j, \label{eq:quaternionic_str}
\end{equation}
Let $R_k$ denote the irreducible representation of $\su(2)$ of dimension $k$.

\begin{definition}
\label{definition:nahm_data}
   A \emph{Nahm data with symmetry breaking type $\bT$ and framing $\cF$} is a collection, $\cN = \( \rd^{(a)}, T^{(a)} \)_{a = 1}^{n - 1}$, such that:
   \begin{enumerate}

      \item For all $a \in \{ 1, 2, \ldots, n - 1 \}$, $\rd^{(a)}$ is a unitary connection of the bundle $\( \lambda_a, \lambda_{a + 1} \) \times \cx^{m_a}$, $\dom \( T^{(a)} \) = \( \lambda_a, \lambda_{a + 1} \)$, and
      \begin{equation}
         T^{(a)} \colon \( \lambda_a, \lambda_{a + 1} \) \rightarrow \u \( m_a \)^{\oplus 3}; \ t \mapsto \( T_1^{(a)} (t), T_2^{(a)} (t), T_3^{(a)} (t) \)
      \end{equation}
      is an analytic function.

      \item For all $a \in \{ 1, 2, \ldots, n - 1 \}$, let $\rd^{(a), 0}$ be the product connection on $\( \lambda_a, \lambda_{a + 1} \) \times \cx^{m_a}$. Then
      \begin{equation}
         T_0^{(a)} \colon \( \lambda_a, \lambda_{a + 1} \) \rightarrow \u \( m_a \); \ t \mapsto \rd_{\del_t}^{(a)} - \rd_{\del_t}^{(a), 0} \label{eq:T_0_def}
      \end{equation}
      extends smoothly over $\left[ \lambda_a, \lambda_{a + 1} \right]$.

      \item $\cN$ solves \emph{Nahm's equation}, that is, for all $a \in \{ 1, 2, \ldots, n - 1 \}$, we have
      \begin{equation}
         \rd_{\del_t}^{(a)} T_1^{(a)} = \left[ T_2^{(a)}, T_3^{(a)} \right], \quad \rd_{\del_t}^{(a)} T_2^{(a)} = \left[ T_3^{(a)}, T_1^{(a)} \right], \quad \mbox{and} \quad \rd_{\del_t}^{(a)} T_3^{(a)} = \left[ T_1^{(a)}, T_2^{(a)} \right]. \label{eq:nahm}
      \end{equation}

      \item For all $a \in \{ 1, 2, \ldots, n - 1 \}$, the decompositions in \Cref{definition:framing} induce embeddings:
      \begin{equation}
         \u \( k_a^- \) \oplus \u \( m_a - k_a^- \) \hookrightarrow \u \( m_a \) \quad \& \quad \u \( k_{a + 1}^+ \) \oplus \u \( m_a - k_{a + 1}^+ \) \hookrightarrow \u \( m_a \),
      \end{equation}
      Under these decompositions, there are elements $\rho_{\alpha, a}^\pm, \mathfrak{T}_{\alpha, a}^\pm \in \u \( k_a^\pm \)$ and $\tau_{\alpha, a}^\pm \in \u \( m_a - k_a^- \)$, such that for $0 < \epsilon \ll 1$ we have that
      \begin{subequations}
      \begin{align}
         T_\alpha^{(a)} \( \lambda_a + \epsilon \)         &=  \begin{pmatrix}
                                                                  - \tfrac{1}{2 \epsilon} \rho_{\alpha, a}^- + \mathfrak{T}_{\alpha, a}^- & 0 \\
                                                                  0 & \tau_{\alpha, a}^-
                                                               \end{pmatrix} + O \( \epsilon \), \label{eq:nahm_pole_+} \\
         T_\alpha^{(a)} \( \lambda_{a + 1} - \epsilon \)   &=  \begin{pmatrix}
                                                                  \tfrac{1}{2 \epsilon} \rho_{\alpha, a + 1}^+ + \mathfrak{T}_{\alpha, a + 1}^+ & 0 \\
                                                                  0 & \tau_{\alpha, a + 1}^+
                                                               \end{pmatrix} + O \( \epsilon \). \label{eq:nahm_pole_-}
      \end{align}
      \end{subequations}
      Note that $m_a - k_a^- = m_{a - 1} - k_a^+$. Furthermore, the maps
      \begin{equation}
         \widehat{\rho}_a^\pm \colon \su (2) \rightarrow \u \( k_a^\pm \); \ x_1 \sigma_1 + x_2 \sigma_2 + x_3 \sigma_3 \mapsto x_1 \rho_{1, a}^\pm + x_2 \rho_{2, a}^\pm + x_3 \rho_{3, a}^\pm,
      \end{equation}
      are Lie algebra homomorphisms that decompose to a direct sum of irreducible $\su (2)$-representations as 
      \begin{equation}
         \widehat{\rho}_a^\pm \cong \bigoplus_{b = 1}^{r_a^\pm} R_{k_{a, b}^\pm}.  \label{eq:reduction}
      \end{equation}
      Furthermore, $\mathfrak{T}_{\alpha, a}^\pm$ is block-diagonal with respect to the decomposition in \cref{eq:reduction}.

      \item For $\alpha \in \{ 1, 2, 3 \}$ and $a \in \{ 2, \ldots, n - 1 \}$, let $\tau_{\alpha, a}^\pm$ be as in \cref{eq:nahm_pole_+,eq:nahm_pole_-} and let us define the self-adjoint operators
      \begin{equation}
         \fC_a \eqdef \sum\limits_{\alpha = 1}^3 \( C_a^* \circ \tau_{\alpha, a }^- \circ C_a - \tau_{\alpha, a-1}^+ \) \otimes \sigma_\alpha \colon \( V_{a - 1}^+ \)^\perp \otimes \bH \rightarrow \( V_{a - 1}^+ \)^\perp \otimes \bH, \label{eq:connecting_maps}
      \end{equation}
      Then there are nonzero and orthogonal elements $\upxi_{a, 1}, \upxi_{a, 2}, \ldots, \upxi_{a, r_a^0} \in \( V_{a - 1}^+ \)^\perp \otimes \bH$ such that
      \begin{equation}
         \fC_a = \sum\limits_{b = 1}^{r_a^0} \upxi_{a, b} \otimes \upxi_{a, b}^\flat, \label{eq:C_a_def}
      \end{equation}
      and let
      \begin{equation}
         \fB_a \eqdef \sqrt{\fC_a} = \sum\limits_{b = 1}^{r_a^0} \tfrac{1}{\left| \upxi_{a, b} \right|} \upxi_{a, b} \otimes \upxi_{a, b}^\flat. \label{eq:B_a_def}
      \end{equation}
      For all $a \in \{ 2, \ldots, n - 1 \}$, let
      \begin{align}
         X_a^\pm     &\eqdef V_a^\pm \otimes \bH, \\
         Y_{a - 1}^+ &\eqdef \spn_\cx \( \left\{ \ \upxi_{a, 1}, \upxi_{a, 2}, \ldots, \upxi_{a, r_a^0} \ \right\} \), \\
         Y_a^-       &\eqdef \( C_a \otimes \id_\bH \) \( Y_{a - 1}^+ \), \\
         Z_a^\pm     &\eqdef \( X_a^\pm \oplus Y_a^\pm \)^\perp.
      \end{align}
      In particular, for all $a \in \{ 1, 2, \ldots, n - 1 \}$, we have the orthogonal decomposition $\cx^{m_a} \otimes \bH = X_a^\pm \oplus Y_a^\pm \oplus Z_a^\pm$ and when $r_a^0$ is nonzero, then $\fC_a$ is positive definite exactly on $Y_a^-$. Furthermore, by \Cref{remark:end_points}, $Y_1^- = Z_1^-$ and $Y_{n - 1}^+ = Z_{n - 1}^+$ are the trivial subspaces of $\cx^{m_1} \otimes \bH$ and $\cx^{m_{n - 1}} \otimes \bH$, respectively.
   \end{enumerate}
\end{definition}

The various subspaces defined in \Cref{definition:framing} and \Cref{definition:nahm_data} are illustrated suggestively in skyscraper diagrams in \Cref{figure:skyscrapers}. 

\begin{figure}[h]
   \begin{tikzpicture}[scale=0.7]
      \draw (-1.6, 4.5) node {${\cx}^{m_{a - 1}}$};
      \draw (1.5, 4.5) node {${\cx}^{m_a}$};
      \draw[->] (-1,4.5) -- (1,4.5);
      \draw (0,4.8) node {$C_a$};
      \draw[->, very thick] (-3.5,0) -- (4,0); 
      \draw[thick] (0,0) -- (0,4);  
      \draw[thick] (0,4) -- (-3,4); 
      \draw[thick] (0,3) -- (3.5,3); 
      \draw (0,1) -- +(-2,0); 
      \draw (0,1)++(-2,1) node {$V_{a - 1}^+ = \ker \( C_a \)$};
      \draw[->] (0,1)++(-1.5,1)+(0,0.3) -- (-1.5,4);
      \draw[->] (0,1)++(-1.5,1)+(0,-0.3) -- (-1.5,1);
      \draw (0,1) -- +(2,0); 
      \draw (0,1)++(2.8,1) node {$V_a^- = \( \image \( C_a \) \)^\perp$};
      \draw[->] (0,1)++(1.5,1)+(0,0.3) -- (1.5,3);
      \draw[->] (0,1)++(1.5,1)+(0,-0.3) -- (1.5,1);
      \draw (0,0) -- (0,-0.2);
      \draw (0,-0.4) node {$\lambda_a$};
   \end{tikzpicture}
   \quad\quad
   \begin{tikzpicture}[scale=0.7]
      \draw (-2.9, 2.5) node {$\cx^{m_{a - 1}}\otimes \bH$};
      \draw (2.9,2.5) node {$\cx^{m_a}\otimes \bH$};
      \draw[->, very thick] (-3.5,0) -- (4,0); 
      \draw[thick] (0,0) -- (0,5);  
      \draw[thick] (0,5) -- (-3,5); 
      \draw[thick] (0,4) -- (3.5,4); 
      \draw (0,3) -- +(-1.5,0); 
      \draw (0,3)++(-.7,1) node {$X_{a - 1}^+$};
      \draw (0,1)++(-.7,1) node {$Z_{a - 1}^+$};
      \draw[dashed] (0,1) -- +(-1.5,0);
      \draw (0,1)++(-.7,-0.5) node {$Y_{a - 1}^+$};
      \draw (0,3) -- +(1.5,0); 
      \draw (0,3)++(.5,.5) node {$X_a^-$};
      \draw (0,1)++(.5,1) node {$Z_a^-$};
      \draw[dashed] (0,1) -- +(1.5,0);
      \draw (0,1)++(.5,-0.5) node {$Y_a^-$};
      \draw (0,0) -- (0,-0.2);
      \draw (0,-0.4) node {$\lambda_a$};
   \end{tikzpicture}
   \caption{Skyscraper diagrams illustrations for \Cref{definition:framing} and \Cref{definition:nahm_data}.}\label{figure:skyscrapers}
\end{figure}

Next, let us define the appropriate gauge group and moduli space.

\begin{definition}
   Let the \emph{gauge group} be
   \begin{equation}
      \cG^\bT \eqdef \bigoplus\limits_{a = 1}^{n - 1} C^\infty \( \left[ \lambda_a, \lambda_{a + 1} \right], \rU \( m_a \) \),
   \end{equation}
   with the natural group structure. If $g = \( g_a \)_{a = 1}^{n - 1} \in \cG^\bT$ and $\cN = \( \rd^{(a)}, T^{(a)} \)_{a = 1}^{n - 1}$ is a Nahm data according to \Cref{definition:nahm_data}, with framing $\cF = \( C_a \)_{a = 2}^{n - 1}$, then let
   \begin{align}
      g \( \cF \) &\eqdef \( g_a \( \lambda_a \) \circ C_a \circ g_{a - 1} \( \lambda_a \)^{- 1} \)_{a = 2}^{n - 1}, \\
      g \( \cN \) &\eqdef \( g_a \circ \rd^{(a)} \circ g_a^{- 1}, \Ad \( g_a \) \circ T^{(a)} \)_{a = 1}^{n - 1}.
   \end{align}
   Then $g \( \cN \)$ is again a Nahm data with symmetry breaking type $\bT$ and framing $g \( \cF \)$.
\end{definition}

\smallskip

\begin{remark}
   \label{eq:unique_framing}
   Every symmetry breaking type determines a unique gauge equivalence class of framings.
\end{remark}

\smallskip

\begin{definition}
   Let $\cB^\bT$ be the space of all Nahm data with symmetry breaking type $\bT$. By the above discussion, $\cG^\bT$ acts on $\cB^\bT$. We define the \emph{moduli space of Nahm data with symmetry breaking type $\bT$} as
   \begin{equation}
      \cM^\bT \eqdef \cB^\bT \big/ \cG^\bT.
   \end{equation}
\end{definition}

\smallskip

Using \Cref{remark:gauging}, we choose the following gauge for the purposes of this paper:

\begin{definition}
   \label{definition:temporal_gauge}
   For all $a \in \{ 1, 2, \ldots, n - 1 \}$, let $T_0^{(a)}$ be as defined in \cref{eq:T_0_def}, let us pick $t_a \in \( \lambda_a, \lambda_{a + 1} \)$, and define $g_a \colon \left[ \lambda_a, \lambda_{a + 1} \right] \rightarrow \rU$ to be the unique solution to the initial value problem
   \begin{align}
   	\frac{\rd g_a}{\rd t} (t)  &= g_a(t) T_0(t), \\
	   g_a (t_a)                  &= 1,
   \end{align}
   and $g \eqdef \( g_a \)_{a = 1}^{n - 1} \in \cG^\bT$. Then note that if $\cN^\prime \eqdef g \( \cN \)$, then the connections in $\cN^\prime$ are all product connections. We say that such a Nahm data is in \emph{temporal gauge}.
\end{definition}

Since all of our constructions in this paper are gauge equivariant, we can assume, without any loss of generality, that $\cN$ is in temporal gauge, that is, for all $a \in \{ 1, 2, \ldots, n - 1 \}$, $\rd^{(a)}$ is the production connection. Hence, for any section $\chi_a$ over $\( \lambda_a, \lambda_{a + 1} \)$, we write $\dot{\chi}_a \eqdef \rd_{\del_t}^{(a)} \chi_a$.

\smallskip

We end this section with a short lemma.

\begin{lemma}
\label{lemma:isomorphisms}
   Let $\bT = \( \underline{\lambda}, \underline{r}, \underline{k} \)$ and $\bT^\prime = \( \underline{\lambda}^\prime, \underline{r}^\prime, \underline{k}^\prime \)$ be such that $\underline{k} = \underline{k}^\prime$.
   Then $\cF$ is a framing of $\bT$ exactly if it is a framing of $\bT^\prime$. Furthermore, there exist canonical isomorphisms
   \begin{subequations}
   \begin{align}
      \cG^\bT  &\cong \cG^{\bT^\prime}, \label{eq:isom_1} \\
      \cB^\bT  &\cong \cB^{\bT^\prime}, \label{eq:isom_2} \\
      \cM^\bT  &\cong \cM^{\bT^\prime}, \label{eq:isom_3}
   \end{align}
   \end{subequations}
   such that the diagram
   \begin{equation} \label[diag]{diag:iso}
   \begin{tikzcd}
      \cG^\bT \arrow[rd, hook] \arrow[r, "\cong"]  & \cG^{\bT^\prime} \arrow[rd, hook]                & \\
                                                   & \cB^\bT \arrow[r, "\cong"] \arrow[d, two heads]  & \cB^{\bT^\prime} \arrow[d, two heads] \\
                                                   & \cM^\bT \arrow[r, "\cong"]                       & \cM^{\bT^\prime}
   \end{tikzcd}
   \end{equation}
   commutes.
\end{lemma}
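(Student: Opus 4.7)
The hypothesis $\underline{k} = \underline{k}'$ forces componentwise equality of the ordered tuples $\underline{k}_a$ and therefore fixes $\underline{r} = \underline{r}'$, all $r_a^\pm$ and $r_a^0$, hence every derived quantity $k_a^\pm, k_a, m_a$, and $N$. Every ambient object---the spaces $\cx^{m_a}$, the groups $\rU \left( m_a \right)$, the allowable dimensions of the $V_a^\pm$, and the irreducible decompositions of \cref{eq:reduction_+,eq:reduction_-}---is therefore literally the same for $\bT$ and $\bT'$; only the partition of $\rl$ by $\underline{\lambda}$ versus $\underline{\lambda}'$ genuinely differs. For each $a \in \left\{ 1, \ldots, n - 1 \right\}$, let $\varphi_a \colon \left[ \lambda_a, \lambda_{a + 1} \right] \to \left[ \lambda_a', \lambda_{a + 1}' \right]$ be the increasing affine bijection of slope $\beta_a \eqdef \left( \lambda_{a + 1}' - \lambda_a' \right) / \left( \lambda_{a + 1} - \lambda_a \right) > 0$, and set $\psi_a \eqdef \varphi_a^{-1}$. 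My plan is to construct all three isomorphisms from this piecewise affine reparametrization.

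The gauge group isomorphism is the straightforward pullback
\begin{equation*}
   \Psi_\cG \colon \cG^\bT \to \cG^{\bT'}, \quad \left( g_a \right)_{a = 1}^{n - 1} \longmapsto \left( g_a \circ \psi_a \right)_{a = 1}^{n - 1},
\end{equation*}
a smooth group isomorphism with the obvious inverse. For the Nahm data map $\Psi_\cB$, I leave the framing $\cF$ unchanged (nothing in \Cref{def:framing} references $\underline{\lambda}$) and exploit the scale-covariance of Nahm's equation by putting
\begin{equation*}
   T_\alpha^{(a), \prime} (s) \eqdef \beta_a^{-1} T_\alpha^{(a)} \left( \psi_a (s) \right), \qquad \alpha \in \left\{ 0, 1, 2, 3 \right\},
\end{equation*}
together with $\rd^{(a), \prime} \eqdef \psi_a^* \rd^{(a)}$. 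A direct chain-rule computation yields $\rd_{\partial_s}^{(a), \prime} T_\alpha^{(a), \prime} = \beta_a^{-2} \bigl( \rd_{\partial_t}^{(a)} T_\alpha^{(a)} \bigr) \circ \psi_a$ and $\bigl[ T_\beta^{(a), \prime}, T_\gamma^{(a), \prime} \bigr] = \beta_a^{-2} \bigl[ T_\beta^{(a)}, T_\gamma^{(a)} \bigr] \circ \psi_a$, so \cref{eq:nahm} is preserved. Equivariance of $\Psi_\cB$ under $\Psi_\cG$ is immediate from the definitions, so once $\Psi_\cB$ is shown to land in $\cB^{\bT'}$, the induced quotient $\Psi_\cM \colon \cM^\bT \to \cM^{\bT'}$ is automatically an isomorphism and \Cref{diag:iso} commutes.

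Most clauses of \Cref{definition:nahm_data} then follow routinely. Analyticity of $T^{(a), \prime}$ and smooth extendibility of $T_0^{(a), \prime}$ to the closed interval pass through composition with the analytic diffeomorphism $\psi_a$. Expanding $T_\alpha^{(a), \prime} ( \lambda_a' + \epsilon ) = \beta_a^{-1} T_\alpha^{(a)} ( \lambda_a + \beta_a^{-1} \epsilon )$ via \cref{eq:nahm_pole_+} shows that the new residue matrix equals the old one, $\rho_{\alpha, a}^{+, \prime} = \rho_{\alpha, a}^+$, so the $\su (2)$-homomorphisms $\widehat{\rho}_a^\pm$ and their irreducible decompositions \cref{eq:reduction_+,eq:reduction_-} are preserved; the finite parts rescale as $\tau_{\alpha, a}^{+, \prime} = \beta_a^{-1} \tau_{\alpha, a}^+$, and the off-diagonal blocks $\cR_{\alpha, a}^{+, \prime} (\epsilon) = \beta_a^{-1} \cR_{\alpha, a}^+ ( \beta_a^{-1} \epsilon )$ inherit the row bound $O \bigl( \epsilon^{( k_{a, b}^+ - 1 ) / 2} \bigr)$ (the exponent being scale invariant). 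The right endpoint is handled symmetrically, yielding $\tau_{\alpha, a + 1}^{-, \prime} = \beta_a^{-1} \tau_{\alpha, a + 1}^-$.

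The delicate step, and the main obstacle I anticipate, is the jump condition at each interior $\lambda_a'$: the boundary values $\tau^{+, \prime}_{\alpha, a}$ and $\tau^{-, \prime}_{\alpha, a}$ inherit the asymmetric scalings $\beta_a^{-1}$ and $\beta_{a - 1}^{-1}$, so
\begin{equation*}
   \mathfrak{C}_a' = \beta_a^{-1} \sum_{\alpha = 1}^3 \left( \tau_{\alpha, a}^+ \circ C_a \right) \otimes \sigma_\alpha \; - \; \beta_{a - 1}^{-1} \sum_{\alpha = 1}^3 \left( C_a \circ \tau_{\alpha, a}^- \right) \otimes \sigma_\alpha,
\end{equation*}
which is not a scalar multiple of the original $\mathfrak{C}_a$ whenever $\beta_a \neq \beta_{a - 1}$. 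Producing the decomposition of \cref{eq:at_zero_jumps} for $\mathfrak{C}_a'$ with a new tuple $\bigl\{ x_{\rho, a}', q_{\rho, a}' \bigr\}$ is the crux of the proof. My strategy is to exploit the low-order constraints that Nahm's equation imposes on $\tau^\pm$ near a pole---obtained by matching the $O \left( \epsilon^{-1} \right)$ and $O (1)$ terms on both sides of \cref{eq:nahm} and reflecting the rigid way $\tau^\pm$ intertwines with the residue $\su (2)$-action---to isolate the $r_a^0$-dimensional ``zero block'' where $\tau^\pm$ is genuinely free, and then use the quadratic dependence of $x \otimes \left( C_a^{-1} x \right)^*$ and $q q^* - \tfrac{1}{2} |q|^2 1_\bH$ on their arguments to absorb the asymmetric $\beta$-factors into an explicit rescaling of the quaternions $q_{\rho, a}$ combined with a matching orthogonal adjustment of the vectors $x_{\rho, a}$. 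With this clause secured, exchanging the roles of $\bT$ and $\bT'$ supplies a two-sided inverse to $\Psi_\cB$, and the equivariance already noted completes the argument.
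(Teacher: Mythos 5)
Your construction coincides with the paper's proof: the paper uses exactly the same piecewise affine reparametrizations (its $f_a$ are your $\psi_a$), rescales the Nahm matrices by the slope $\tfrac{\lambda_{a+1}-\lambda_a}{\lambda_{a+1}^\prime-\lambda_a^\prime}=\beta_a^{-1}$, leaves the framing $\cF$ untouched, defines the gauge-group isomorphism by the same pullback, and obtains the map on $\cM$ and the commutativity of \cref{diag:iso} by the equivariance you note. Your verification of conditions (1)--(3) of \Cref{definition:nahm_data} (invariance of the residues $\rho^\pm_{\alpha,a}$, rescaling of $\tau^\pm$, the row bounds on $\cR^\pm_{\alpha,a}$) is also the content the paper treats as routine.

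The gap is the step you yourself call the crux: you never actually prove that the rescaled data satisfies \cref{eq:at_zero_jumps}, and the strategy you sketch does not survive scrutiny. The discrepancy $\mathfrak{C}_a^\prime-\beta_a^{-1}\mathfrak{C}_a=\left(\beta_a^{-1}-\beta_{a-1}^{-1}\right)\sum_{\alpha=1}^3\left(C_a\circ\tau^-_{\alpha,a}\right)\otimes\sigma_\alpha$ involves the full boundary value $\tau^-_{\alpha,a}$ on all of $\left(V_{a-1}^-\right)^\perp$, not just an ``$r_a^0$-dimensional zero block'', and nothing in conditions (1)--(3) constrains $\tau^\pm$ beyond their being limits of the nonpolar block, so no low-order pole analysis of Nahm's equation isolates the discrepancy where you want it. The case $r_a^0=0$ is a clean test: there \cref{eq:at_zero_jumps} forces $\mathfrak{C}_a=0$, i.e.\ $\tau^+_{\alpha,a}\circ C_a=C_a\circ\tau^-_{\alpha,a}$ for each $\alpha$, whence $\mathfrak{C}_a^\prime=\left(\beta_a^{-1}-\beta_{a-1}^{-1}\right)\sum_\alpha\left(\tau^+_{\alpha,a}\circ C_a\right)\otimes\sigma_\alpha$, which is nonzero whenever $\beta_{a-1}\neq\beta_a$ and $\tau^+_{\cdot,a}\neq 0$; there are no $x$'s or $q$'s available to absorb anything, replacing $C_a$ by another unitary cannot rescale a nonzero spectrum, and gauge transformations act by conjugation, so they cannot help either. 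Thus either one must argue that the relevant boundary data transforms compatibly (or modify the map), and your proposal supplies no such argument. In fairness, the paper's own proof hides this verification inside ``it is easy to check'', so you have located exactly the point where the written argument is thinnest --- but as it stands your proof does not close it.
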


\begin{proof}
   Note that $\underline{k}$ already determines $\underline{r}$, the size $n$, and rank $N$ of a symmetry breaking type. Thus, $\underline{r} = \underline{r}^\prime$, $n = n^\prime$, and $N = N^\prime$. Now a framing $\cF$ that is compatible with one of the symmetry breaking types can naturally and uniquely be regarded as a framing for the other as well. Thus, we can use the ideas of \cite{Charbonneau-construction-monopoles-symmetries}*{Remark 4.10} as follows: since the spaces in \cref{eq:isom_1,eq:isom_2} are both spaces of functions, we construct isomorphisms of their domains and prove that the corresponding pullbacks are the desired isomorphisms. Note that
   \begin{equation}
      \bigcup\limits_{a = 1}^{n - 1} \left[ \lambda_a, \lambda_{a + 1} \right] = \left[ \lambda_1, \lambda_n \right] \quad \mbox{and} \quad \bigcup\limits_{a = 1}^{n - 1} \left[ \lambda_a^\prime, \lambda_{a + 1}^\prime \right] = \left[ \lambda_1^\prime, \lambda_n^\prime \right].
   \end{equation}
   For all $a \in \{ 1, 2, \ldots, n - 1 \}$, let
   \begin{equation}
      f_a \colon \left[ \lambda_a^\prime, \lambda_{a + 1}^\prime \right] \rightarrow \left[ \lambda_a, \lambda_{a + 1} \right] ; \ t \mapsto \tfrac{\lambda_{a + 1} - \lambda_a}{\lambda_{a + 1}^\prime - \lambda_a^\prime} t + \tfrac{\lambda_{a + 1}^\prime \lambda_a - \lambda_{a + 1} \lambda_a^\prime}{\lambda_{a + 1}^\prime - \lambda_a^\prime},
   \end{equation}
   and let $f_{\bT, \bT^\prime} \colon \left[ \lambda_1^\prime, \lambda_n^\prime \right] \rightarrow \left[ \lambda_1, \lambda_n \right]$ be defined so that if $t \in \left[ \lambda_a^\prime, \lambda_{a + 1}^\prime \right]$, then $f_{\bT, \bT^\prime} (t) = f_a (t)$. Note that $f_{\bT, \bT^\prime}$ is now well-defined and continuous. For all $g = \( g_a \)_{a = 1}^{n - 1} \in \cG^\bT$, let
   \begin{equation}
      f_{\bT, \bT^\prime}^* \( g \) \eqdef \( g_a \circ f_a \)_{a = 1}^{n - 1} \in \cG^{\bT^\prime}.
   \end{equation}
   This map is the desired isomorphism in \cref{eq:isom_1}. If $\cN = \( \rd^{(a)}, T^{(a)} \)_{a = 1}^{n - 1} \in \cB^\bT$, then let
   \begin{equation}
      f_{\bT, \bT^\prime}^* \( \cN \) \eqdef \( \tfrac{\lambda_{a + 1} - \lambda_a}{\lambda_{a + 1}^\prime - \lambda_a^\prime} T^{(a)} \circ \( f_a \big|_{\( \lambda_a^\prime, \lambda_{a + 1}^\prime \)} \) \)_{a = 1}^{n - 1},
   \end{equation}
   then it is easy to check the commutativity of the diagram
   \begin{equation}
   \begin{tikzcd}
      \cG^\bT \arrow[rd, hook] \arrow[r, "f_{\bT, \bT^\prime}^*"] & \cG^{\bT^\prime} \arrow[rd, hook]          & \\
                                                                  & \cB^\bT \arrow[r, "f_{\bT, \bT^\prime}^*"] & \cB^{\bT^\prime}
   \end{tikzcd}
   \end{equation}
   Finally, one can verify, with a little more work that the map
   \begin{equation}
      \cM^\bT \ni \left[ \cN \right] \mapsto \left[ f_{\bT, \bT^\prime}^* \( \cN \) \right] \in \cM^{\bT^\prime},
   \end{equation}
   is well-defined and is an isomorphism, which proves \cref{eq:isom_3}.
\end{proof}

\begin{remark}
   In \Cref{lemma:isomorphisms} ``isomorphism'' means merely a bijection, as we omit the problem of topologizing $\cB^\bT$, and hence $\cM^\bT$. Note however that $\cG^\bT$ is a finite dimensional Lie-group, thus it has a canonical smooth structure and, of course, the identity map is a diffeomorphism. Furthermore, the map $f_{\bT, \bT^\prime}^*$ constructed in \Cref{lemma:isomorphisms} is conjecturally also a diffeomorphism for the appropriate smooth structures.
\end{remark}

\bigskip

\section{The Nahm transform and Dirac--Nahm operator}
\label{sec:fredholm}

For the rest of the paper, we fix a Nahm data, $\cN$, as above, with symmetry breaking type $\bT$ and framing $\cF$.

\smallskip

\begin{definition}[Nahm's construction]
\label{definition:nahm_construction}
   Let
   \begin{equation}
      L^2 \( \bT \) \eqdef \( \bigoplus_{a = 1}^{n - 1} L^2 \( \( \lambda_a, \lambda_{a + 1} \), \cx^{m_a} \otimes \bH \) \) \oplus \( \bigoplus\limits_{b = 2}^{n - 1} Y_{b-1}^+ \).
   \end{equation}

   We say that an element
   \begin{equation}
      \Psi = \( \uppsi_a \)_{a = 1}^{n - 1} \in \bigoplus_{a = 1}^{n - 1} L_1^2 \( \( \lambda_a, \lambda_{a + 1} \), \cx^{m_a} \otimes \bH \),
   \end{equation}
   satisfies the \emph{Nahm boundary conditions} if for all $a \in \{ 1, \ldots, n - 1 \}$, we have
   \begin{subequations}
   \begin{align}
      \lim\limits_{\epsilon \rightarrow 0^+} \Pi_{X_a^+} \( \uppsi_a \( \lambda_a + \epsilon \) \)             &= 0, \label[cond]{cond:nahm1} \\
      \lim\limits_{\epsilon \rightarrow 0^+} \Pi_{X_{a + 1}^-} \( \uppsi_a \( \lambda_{a + 1} - \epsilon \) \) &= 0, \label[cond]{cond:nahm2}
   \end{align}
   and for all $a \in \{ 2, \ldots, n - 1 \}$
   \begin{equation}
      \lim\limits_{\epsilon \rightarrow 0^+} C_a \( \uppsi_{a - 1} \( \lambda_a - \epsilon \) \) = \lim\limits_{\epsilon \rightarrow 0^+} \uppsi_a \( \lambda_a + \epsilon \). \label[cond]{cond:nahm3}
   \end{equation}
   \end{subequations}
   Furthermore, not that the Nahm boundary \cref{cond:nahm1,cond:nahm2,cond:nahm3} depend on the framing $\cF$. Now let
   \begin{equation}
      L_1^2 \( \bT, \cF \) \eqdef \left\{ \ \Psi \in \bigoplus_{a = 1}^{n - 1} L_1^2 \( \( \lambda_a, \lambda_{a + 1} \), \cx^{m_a} \otimes \bH \) \ \middle| \ \Psi \textnormal{ satisfies \cref{cond:nahm1,cond:nahm2,cond:nahm3}} \ \right\}.
   \end{equation}
   The Hilbert structure of $L_1^2 \( \bT, \cF \)$ is defined by
   \begin{equation}
      \| \Psi \|_{L_1^2 \( \bT, \cF \)}^2 \eqdef \sum\limits_{a = 1}^{n - 1} \( \| \dot{\uppsi}_a \|_{L_1^2 \( \lambda_a, \lambda_{a + 1} \)}^2 + \| \uppsi \|_{L_1^2 \( \lambda_a, \lambda_{a + 1} \)}^2 \). \label{eq:H1_norm}
   \end{equation}

   For or each $x \in \rl^3$, the \emph{Dirac--Nahm operator}, $\D_x^\cN \colon L_1^2 \( \bT, \cF \) \rightarrow L^2 \( \bT \)$, is defined via
   \begin{equation}
      \D_x^\cN \( \( \uppsi_a \)_{a = 1}^{n - 1} \) \eqdef \( i \dot{\uppsi}_a + \sum\limits_{\alpha = 1}^3 \( \( i T_\alpha^{(a)} - x_\alpha \id_{\cx^{m_a}} \) \otimes \sigma_\alpha \) \uppsi_a \)_{a = 1}^{n - 1} \oplus \( \fB_a \( \lim\limits_{\epsilon \rightarrow 0^+} \uppsi_{a - 1} \( \lambda_a - \epsilon \) \) \)_{a = 2}^{n - 1}. \label{eq:DN_op}
   \end{equation}

   By \Cref{theorem:fredholm} below, $\( \D_x^\cN \)_{x \in \rl^3}$ is a smooth family of Fredholm operators of index $- N$ and trivial kernel. Hence, $E^\cN \eqdef \( E_x^\cN \eqdef \coker \( \D_x^\cN \) \)_{x \in \rl^3}$ defines a smooth Hermitian vector bundle of rank $N$ over $\rl^3$. Let $\nabla^\cN$ be the unitary connection on $E^\cN$ that is induced by the product connection on the product $L^2 \( \bT \)$-bundle over $\rl^3$. Let $\hat{t}$ be multiplication by $t$ (that is $\hat{t}(\psi)(t)=t\psi(t)$) and
   \begin{equation}
      M \colon L^2 \( \bT \) \rightarrow L^2 \( \bT \); \: \( \uppsi_a \)_{a = 1}^{n - 1} \oplus \( \upphi_b \)_{b = 2}^{n - 1} \mapsto \( i \hat{t} \( \uppsi_a \) \)_{a = 1}^{n - 1} \oplus \( i \lambda_b \upphi_b \)_{b = 2}^{n - 1}, \label{eq:M_def}
   \end{equation}
   and, for all $x \in \rl^3$, let $\Pi_{E_x^\cN}$ be the orthogonal projection onto $E_x^\cN$ and let $\Phi_x^\cN$ be the operator   
   \begin{equation}
      \Phi_x^\cN \colon E_x^\cN \rightarrow E_x^\cN; \ \Psi \mapsto  \( \Pi_{E_x^\cN} \circ M \) \( \Psi \).
   \end{equation}
\end{definition}

\smallskip

\begin{remark}
   \label{remark:gauging}
   If $g \eqdef \( g_a \)_{a = 1}^{n - 1} \in \cG^\bT$ and $\cN^\prime \eqdef g \( \cN \)$, then the assignment
   \begin{equation}
      \Psi = \( \uppsi_a \)_{a = 1}^{n - 1} \mapsto \( g_a \( \uppsi_a \) \)_{a = 1}^{n - 1},
   \end{equation}
   defines a unitary isomorphism between $L_1^2 \( \bT, \cF \)$ and $L_1^2 \( \bT, g \( \cF \) \)$. Similarly,
   \begin{equation}
      \Psi = \( \uppsi_a \)_{a = 1}^{n - 1} \oplus \( \upxi_a \)_{a = 2}^{n - 1} \mapsto \( g_a \( \uppsi_a \) \)_{a = 1}^{n - 1} \oplus \( \lim\limits_{\epsilon \rightarrow 0^+} g_{a-1} \( \lambda_a - \epsilon \) \upxi_a \)_{a = 2}^{n - 1},
   \end{equation}
   defines a unitary automorphism of $L^2 \( \bT \)$. Let us denote both by $\widehat{g}$. Now for all $x \in \rl^3$, we have that $\D_x^{\cN^\prime} = \widehat{g} \circ \D_x^\cN \circ \widehat{g}^{- 1}$. Thus, $\widehat{g}$ restricts to an isomorphism of Hermitian vector bundles, $\widehat{g}|_{E^\cN} \defeq g_* : E^\cN \rightarrow E^{\cN^\prime}$, such that $g_* \( \nabla^\cN, \Phi^\cN \) = \( \nabla^{\cN^\prime}, \Phi^{\cN^\prime} \)$.
\end{remark}

\smallskip

\begin{remark}
   By \cref{eq:unique_framing}, the framing of $\bT$ is unique, up to gauge, and by \Cref{remark:gauging}, gauge equivalent framings determine canonically equivalent Sobolev spaces. Thus, from now on, we drop the framing from the notation of $L_1^2 \( \bT, \cF \) \defeq L_1^2 \( \bT \)$.
\end{remark}

\smallskip

\begin{remark}
   When $n = N$, and thus for all $a \in \{ 1, 2, \ldots, n \}$, $r_a = 1$, and $\cN$ is in temporal gauge, then the above construction recovers the one by Hurtubise and Murray in \cite{hurtubise_construction_1989}*{Section~4}.
\end{remark}

\smallskip

Note that in \Cref{definition:nahm_construction} we claimed, without proof, that $\( \D_x^\cN \)_{x \in \rl^3}$ is a smooth family of Fredholm operators of index $- N$ and trivial cokernel. Since this still needs verification, let us for all $x \in \rl^3$ and smooth $\Psi = \( \uppsi_a \)_{a = 1}^{n - 1} \in L_1^2 \( \bT \)$ define
\begin{equation}
   \widetilde{\D}_x^\cN \( \Psi \) \eqdef \( i \dot{\uppsi}_a + \sum\limits_{\alpha = 1}^3 \( \( i T_\alpha^{(a)} - x_\alpha \id_{\cx^{m_a}} \) \otimes \sigma_\alpha \) \uppsi_a \)_{a = 1}^{n - 1} \oplus \( \fB_a \( \lim\limits_{\epsilon \rightarrow 0^+}  \uppsi_{a - 1} \( \lambda_a - \epsilon \) \) \)_{a = 2}^{n - 1}. \label{eq:D_def}
\end{equation}
which is a smooth element of $L^2 \( \bT \)$. In order to prove \Cref{theorem:nahm}, namely that the pair $\( \nabla^\cN, \Phi^\cN \)$ is an $\SU (N)$-monopole that satisfies \cref{eq:mono1,eq:mono3,eq:mono2} with the desired asymptotics, we first need to build up the necessary functional analytic framework for the Dirac--Nahm operator \eqref{eq:DN_op}, which we do next in \Cref{theorem:fredholm}.

\medskip

\begin{theorem}
\label{theorem:fredholm}
   For all $x \in \rl^3$ the following statements hold:
   \begin{enumerate}

      \item The operator $\widetilde{\D}_x^\cN$ has a unique extension as a continuous linear map from $L_1^2 \( \bT \)$ to $L^2 \( \bT \)$, denoted by $\D_x^\cN$.

      \item The kernel of $\D_x^\cN$ is trivial.

      \item The operator $\D_x^\cN$ is Fredholm and $\index \( \D_x^\cN \) = - N$.

   \end{enumerate}

   Furthermore, the assignment $x \mapsto \D_x^\cN$ is analytic in the operator norm.
\end{theorem}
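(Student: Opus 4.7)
The plan is to prove the four assertions---boundedness of the extension, trivial kernel, Fredholm with index $-N$, and analyticity in $x$---in that order, with a Weitzenböck identity at the core. Since $\D_x^\cN$ depends affinely on $x$ (via the bounded, self-adjoint additive term $\sum_\alpha x_\alpha \otimes \sigma_\alpha$), analyticity in operator norm is immediate once (1) is in place, so only the first three items require real work.

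For (1), the only potentially singular terms in the expansion of $\widetilde{\D}_x^\cN$ come from the poles \cref{eq:nahm_pole_+,eq:nahm_pole_-} of $T_\alpha^{(a)}$ at the endpoints $\lambda_a, \lambda_{a + 1}$. The residue $\rho_{\alpha, a}^+$ is supported on $V_a^+$, so the most dangerous expression is $\tfrac{1}{t - \lambda_a} \Pi_{V_a^+ \otimes \bH} \uppsi_a (t)$. By \cref{cond:nahm1} the trace of $\Pi_{X_a^+ \oplus Y_a^+} \uppsi_a$ vanishes at $\lambda_a$, and since $X_a^+ = V_a^+ \otimes \bH \subseteq X_a^+ \oplus Y_a^+$, the one-dimensional Hardy inequality
\begin{equation*}
   \int_{\lambda_a}^{\lambda_a + \delta} \frac{|f (t)|^2}{(t - \lambda_a)^2} \, \rd t \leqslant 4 \int_{\lambda_a}^{\lambda_a + \delta} |f' (t)|^2 \, \rd t \qquad \text{when } f (\lambda_a) = 0
\end{equation*}
bounds the $L^2$ norm of the singular contribution by the $\cH_1^\cN$ norm. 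The off-diagonal blocks $\cR_{\alpha, a}^\pm$ are $O (1)$ by the tail condition in \Cref{definition:nahm_data} (with better decay on the rows corresponding to $\widehat{\rho}_a^\pm$), hence contribute bounded multipliers. A mirror argument treats $\lambda_{a + 1}$. Thus $\widetilde{\D}_x^\cN$ extends uniquely to a bounded operator $\D_x^\cN \colon \cH_1^\cN \to \cH_0^\cN$.

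The central identity is the Weitzenböck formula
\begin{equation*}
   (\D_x^\cN)^* \D_x^\cN = - \partial_t^2 + \sum_\alpha \left( i T_\alpha^{(a)} + x_\alpha \right)^2
\end{equation*}
on each interval. Expanding $(i \partial_t + A)^* (i \partial_t + A)$ with $A = \sum_\alpha (i T_\alpha + x_\alpha) \sigma_\alpha$, the cross-term $i [\partial_t, A] = - \sum_\alpha \dot{T}_\alpha \sigma_\alpha$ is cancelled \emph{exactly} by the antisymmetric part $\sum_{\alpha < \beta} [T_\alpha, T_\beta] \sigma_\alpha \sigma_\beta$ of $A^2$, by virtue of Nahm's equation \cref{eq:nahm}. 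Pairing with $\Psi$ and integrating by parts piecewise gives
\begin{equation*}
   \left\| \D_x^\cN \Psi \right\|_{\cH_0^\cN}^2 = \left\| \partial_t \Psi \right\|_{\cH_0^\cN}^2 + \sum_\alpha \left\| (i T_\alpha + x_\alpha) \Psi \right\|_{\cH_0^\cN}^2 + \cB (\Psi),
\end{equation*}
where $\cB (\Psi)$ collects the endpoint and interior-interface contributions. Conditions \cref{cond:nahm1,cond:nahm2,cond:nahm3}---and in particular the choice of $Y_a^\pm$ dictated by \cref{eq:connecting_maps,eq:at_zero_jumps} together with the unitaries $C_a$---have been designed so that $\cB (\Psi) \geqslant 0$ for $\Psi \in \cH_1^\cN$, with the interior jumps telescoping cleanly across the $\lambda_a$'s. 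Triviality of $\ker (\D_x^\cN)$ follows: $\D_x^\cN \Psi = 0$ forces $\partial_t \Psi \equiv 0$ and $(i T_\alpha + x_\alpha) \Psi \equiv 0$ on each interval, and the total vanishing at the extreme endpoints $\lambda_1$ and $\lambda_n$---guaranteed by \Cref{remark:end_points}, where $V_1^+ = \cx^{m_1}$ and $V_{n - 1}^- = \cx^{m_{n - 1}}$---propagates through the matching conditions to force $\Psi \equiv 0$.

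The Fredholm property follows from the Gårding-type estimate $\| \Psi \|_{\cH_1^\cN}^2 \lesssim \| \D_x^\cN \Psi \|_{\cH_0^\cN}^2 + \| \Psi \|_{\cH_0^\cN}^2$ afforded by the Weitzenböck identity, combined with the Rellich compact embedding $\cH_1^\cN \hookrightarrow \cH_0^\cN$ (each interval being bounded) and the parallel argument applied to the formal adjoint. The index is locally constant in $x$, and its value $-N$ can be obtained either by homotoping to a model operator---turning off the Nahm matrices away from their poles and reading off the kernel and cokernel directly---or by an APS-type boundary index count: the $2 \sum_a m_a$ complex dimensions of boundary data are cut down by the $2 k_a^+ + r_a^0$ vanishing conditions on the left, the $2 k_{a + 1}^- + r_{a + 1}^0$ vanishing conditions on the right, and the $C_a$-matching conditions, and the signed total telescopes to $- \sum_a r_a = - N$. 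The hard part of the plan is precisely this boundary analysis: one must verify that the elaborate choice of $V_a^\pm, Y_a^\pm, Z_a^\pm$ and $C_a$ in \Cref{definition:nahm_construction} make $\cB (\Psi)$ genuinely non-negative with strict positivity off the zero section, and that the signed boundary dimensions telescope to exactly $-N$; everything else reduces to routine first-order elliptic ODE theory.
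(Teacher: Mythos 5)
Your treatment of boundedness, kernel triviality, and analyticity is essentially the paper's argument: the Weitzenb\"ock-type identity \cref{eq:weitz} (cross terms $\dot T_\alpha^{(a)}$ cancelled against the commutators by Nahm's \cref{eq:nahm}), Hardy's inequality \cref{ineq:hardy} to absorb the $1/(t-\lambda_a)$ poles using \cref{cond:nahm1,cond:nahm2}, and the boundary contributions killed by the boundary conditions (in the paper they vanish exactly rather than merely giving a nonnegative $\cB(\Psi)$, with the interface terms cancelling via \cref{cond:nahm3}); closed range plus finite-dimensionality of $\ker\left( \left( \D_x^\cN \right)^* \right)$ then gives Fredholmness. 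Up to this point the proposal is sound and parallel to the paper.

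The genuine gap is the index computation, which you yourself defer as ``the hard part.'' Your signed boundary count treats the endpoint conditions as if the coefficients were regular there, so that each of the $2k_a^+ + r_a^0$ (resp.\ $2k_{a+1}^- + r_{a+1}^0$) vanishing conditions cuts the index by one; because of the poles this is false. Already for an $\SU(2)$ charge-$k$ datum ($n=2$, $m_1=k$) that count gives $2k - 2k - 2k = -2k$ instead of $-2$. The correct count requires the regular-singular-point analysis at each pole (imported in the paper from \cite{hitchinMonopoles}): with residues decomposing as in \cref{eq:reduction_+,eq:reduction_-}, the $2m_a$-dimensional solution space of the adjoint equation \cref{eq:strong_ODE} splits into a $(2m_a-2k_a^+)$-dimensional bounded part and, for each irreducible summand $R_{k_{a,b}^+}$, pieces growing like $(t-\lambda_a)^{-(k_{a,b}^+-1)/2}$ or decaying like $(t-\lambda_a)^{(k_{a,b}^++1)/2}$, so the $L^2$ solutions near $\lambda_a$ form a space of dimension $2m_a - k_a^+ + r_a^+$ --- a quantity invisible to a naive condition count, and the source of the representation-theoretic data entering the theorem. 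The paper assembles these local counts, together with the rank-$r_a^0$ jump conditions \cref{eq:co_nahm_a}, into a finite-dimensional comparison map $\cD$ whose index telescopes to $N$, and then needs a second ingredient you omit entirely: to pin down $\dim \coker \left( \D_x^\cN \right) = N$ one must show $\coker(\cD)=0$, which is done by dualizing and identifying $\left( \coker \cD \right)^*$ with $\ker \left( \D_x^\cN \right) = 0$ as in \cite{hurtubise_construction_1989}. Your alternative of homotoping to a model operator faces the same issues, since the domain $\cH_1^\cN$ (through $X_a^\pm, Y_a^\pm, Z_a^\pm$ and the $C_a$-matching) and the pole structure move with the Nahm data, and computing the model's cokernel is again exactly this singular ODE analysis; as sketched, the count would not give $-N$.
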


\begin{proof}
   In order to prove the first point, we show that the map in \cref{eq:D_def} has finite operator norm. Let $h^a$ be the Hermitian structure on $\( \lambda_a, \lambda_{a + 1} \) \times \cx^{m_a} \otimes \bH$. Let $\Psi = \( \uppsi_a \)_{a = 1}^{n - 1} \in L_1^2 \( \bT \)$ be smooth and for each $a \in \{ 2, 3, \ldots, n - 1 \}$, let
   \begin{equation}
      \upphi_a \eqdef \lim\limits_{\epsilon \rightarrow 0^+} \uppsi_{a - 1} \( \lambda_a - \epsilon \).
   \end{equation}
   Note that $\upphi_a \in Y_{a - 1}^+ \oplus Z_{a - 1}^+$ and that
   \begin{equation}
      \( C_a \otimes \id_\bH \) \( \upphi_a \) = \lim\limits_{\epsilon \rightarrow 0^+} \uppsi_a \( \lambda_a + \epsilon \) \in Y_a^- \oplus Z_a^- ,
   \end{equation}
   by \cref{cond:nahm1,cond:nahm2,cond:nahm3}. Now
   \begin{align}
      \| \widetilde{\D}_x^\cN \( \Psi \) \|_{L^2 \( \bT \)}^2  &= \sum\limits_{a = 1}^{n - 1} \| i \dot{\uppsi}_a + \sum\limits_{\alpha = 1}^3 \( \( i T_\alpha^{(a)} - x_\alpha \id_{\cx^{m_a}} \) \otimes \sigma_\alpha \) \uppsi_a \|_{L^2 \( \lambda_a, \lambda_{a + 1} \)}^2 + 
      \sum\limits_{a = 2}^{n - 1} \left| \fB_a \( \upphi_a \) \right|^2 \\
         &= \underbrace{\sum\limits_{a = 1}^{n - 1} \| \dot{\uppsi}_a \|_{L^2 \( \lambda_a, \lambda_{a + 1} \)}^2 + \sum\limits_{a = 2}^{n - 1} \left| \fB_a \( \upphi_a \) \right|^2}_{\cI_1} \\
         & \quad + \underbrace{\sum\limits_{a = 1}^{n - 1} \sum\limits_{\alpha = 1}^3 2 \: \Re \( \left\langle i \dot{\uppsi}_a \middle| \( \( i T_\alpha^{(a)} - x_\alpha \id_{\cx^{m_a}} \) \otimes \sigma_\alpha \) \uppsi_a \right\rangle_{L^2 \( \lambda_a, \lambda_{a + 1} \)} \)}_{\cI_2} \\
         & \quad + \underbrace{\sum\limits_{a = 1}^{n - 1} \| \sum\limits_{\alpha = 1}^3 \( \( i T_\alpha^{(a)} - x_\alpha \id_{\cx^{m_a}} \) \otimes \sigma_\alpha \) \uppsi_a \|_{L^2 \( \lambda_a, \lambda_{a + 1} \)}^2}_{\cI_3}.
   \end{align}
   Note that $\cI_1$ is already nonnegative.

   Using integration by parts and the skew-adjointness of $\sigma_\alpha$, the corresponding summand to each $a \in \{ 1, 2, \ldots, n - 1 \}$ and $\alpha \in \{ 1, 2, 3 \}$ in $\cI_2$ can be rewritten as
   \begin{align}
      \cI_{2, a, \alpha}   &\eqdef 2 \: \Re \( \left\langle i \dot{\uppsi}_a \middle| \( \( i T_\alpha^{(a)} - x_\alpha \id_{\cx^{m_a}} \) \otimes \sigma_\alpha \) \uppsi_a \right\rangle_{L^2 \( \lambda_a, \lambda_{a + 1} \)} \) \\
            &= \tfrac{1}{2} \cI_{2, a, \alpha} + \Re \( \left\langle i \dot{\uppsi}_a \middle| \( \( i T_\alpha^{(a)} - x_\alpha \id_{\cx^{m_a}} \) \otimes \sigma_\alpha \) \uppsi_a \right\rangle_{L^2 \( \lambda_a, \lambda_{a + 1} \)} \) \\
            &= \tfrac{1}{2} \cI_{2, a, \alpha} - \Re \( \left\langle \( \( i T_\alpha^{(a)} - x_\alpha \id_{\cx^{m_a}} \) \otimes \sigma_\alpha \) \uppsi_a \middle| i \dot{\uppsi}_a \right\rangle_{L^2 \( \lambda_a, \lambda_{a + 1} \)} \) \\
            & \quad - \Re \( \left\langle \uppsi_a \middle| \( \dot{T}_\alpha^{(a)} \otimes \sigma_\alpha \) \uppsi_a \right\rangle_{L^2 \( \lambda_a, \lambda_{a + 1} \)} \) \\
            & \quad + \( \lim\limits_{t \rightarrow \lambda_{a + 1}^-} - \lim\limits_{t \rightarrow \lambda_a^+} \) \Re \( h^a \( \uppsi_a (t), \( \( T_\alpha^{(a)} (t) + i x_\alpha \id_{\cx^{m_a}} \) \otimes \sigma_\alpha \) \uppsi_a (t) \) \) \\
            &= \tfrac{1}{2} \cI_{2, a, \alpha} - \tfrac{1}{2} \cI_{2, a, \alpha} - \Re \( \left\langle \uppsi_a \middle| \( \dot{T}_\alpha^{(a)} \otimes \sigma_\alpha \) \uppsi_a \right\rangle_{L^2 \( \lambda_a, \lambda_{a + 1} \)} \) \\
            & \quad + \lim\limits_{\epsilon \rightarrow 0^+} \Re \( h^a \( \Pi_{X_a^+} \uppsi_a, \Pi_{X_a^+} \( T_\alpha^{(a)} \otimes \sigma_\alpha \) \uppsi_a \)_{\lambda_{a + 1} - \epsilon} \) \\
            & \quad - \lim\limits_{\epsilon \rightarrow 0^+} \Re \( h^a \( \Pi_{X_a^-} \uppsi_a, \Pi_{X_a^-} \( T_\alpha^{(a)} \otimes \sigma_\alpha \) \uppsi_a \)_{\lambda_a + \epsilon} \) \\
            & \quad + \lim\limits_{\epsilon \rightarrow 0^+} \Re \( h^a \( \Pi_{Y_a^+} \uppsi_a, \Pi_{Y_a^+} \( T_\alpha^{(a)} \otimes \sigma_\alpha \) \uppsi_a \)_{\lambda_{a + 1} - \epsilon} \) \\
            & \quad - \lim\limits_{\epsilon \rightarrow 0^+} \Re \( h^a \( \Pi_{Y_a^-} \uppsi_a, \Pi_{Y_a^-} \( T_\alpha^{(a)} \otimes \sigma_\alpha \) \uppsi_a \)_{\lambda_a + \epsilon} \) \\
            & \quad + \lim\limits_{\epsilon \rightarrow 0^+} \Re \( h^a \( \Pi_{Z_a^+} \uppsi_a, \Pi_{Z_a^+} \( T_\alpha^{(a)} \otimes \sigma_\alpha \) \uppsi_a \)_{\lambda_{a + 1} - \epsilon} \) \\
            & \quad - \lim\limits_{\epsilon \rightarrow 0^+} \Re \( h^a \( \Pi_{Z_a^-} \uppsi_a, \Pi_{Z_a^-} \( T_\alpha^{(a)} \otimes \sigma_\alpha \) \uppsi_a \)_{\lambda_a + \epsilon} \).
   \end{align}
   Since $\Psi$ is smooth and satisfies \cref{cond:nahm1,cond:nahm2,cond:nahm3}, we get that the boundary terms corresponding to $X_a^\pm$ all vanish and using also \cref{eq:nahm_pole_+,eq:nahm_pole_-} we also get that the boundary terms corresponding to $Z_a^\pm$ all cancel after summing for all relevant $a, \alpha$. Using these observations and \cref{eq:B_a_def}, we get
   \begin{align}
      \cI_2 &= - \sum\limits_{a = 1}^{n - 1} \sum\limits_{\alpha = 1}^3 \Re \( \left\langle \uppsi_a \middle| \( \dot{T}_\alpha^{(a)} \otimes \sigma_\alpha \) \uppsi_a \right\rangle_{L^2 \( \lambda_a, \lambda_{a + 1} \)} \) \\
            & \quad + \sum\limits_{a = 1}^{n - 1} \sum\limits_{\alpha = 1}^3 \lim\limits_{\epsilon \rightarrow 0^+} \Re \( h^a \( \Pi_{Y_a^+} \uppsi_a, \Pi_{Y_a^+} \( T_\alpha^{(a)} \otimes \sigma_\alpha \) \uppsi_a \)_{\lambda_{a + 1} - \epsilon} \) \\
            & \quad - \sum\limits_{a = 1}^{n - 1} \sum\limits_{\alpha = 1}^3 \lim\limits_{\epsilon \rightarrow 0^+} \Re \( h^a \( \Pi_{Y_a^-} \uppsi_a, \Pi_{Y_a^-} \( T_\alpha^{(a)} \otimes \sigma_\alpha \) \uppsi_a \)_{\lambda_a + \epsilon} \) \\
            &= - \sum\limits_{a = 1}^{n - 1} \sum\limits_{\alpha = 1}^3 \Re \( \left\langle \uppsi_a \middle| \( \dot{T}_\alpha^{(a)} \otimes \sigma_\alpha \) \uppsi_a \right\rangle_{L^2 \( \lambda_a, \lambda_{a + 1} \)} \) - \sum\limits_{a = 2}^{n - 1} \Re \( h^a \( \upphi_a, \fC_a \( \upphi_a \) \) \) \\
            &= - \sum\limits_{a = 1}^{n - 1} \sum\limits_{\alpha = 1}^3 \Re \( \left\langle \uppsi_a \middle| \( \dot{T}_\alpha^{(a)} \otimes \sigma_\alpha \) \uppsi_a \right\rangle_{L^2 \( \lambda_a, \lambda_{a + 1} \)} \) - \sum\limits_{a = 2}^{n - 1} \left| \fB_a \( \upphi_a \) \right|^2.
   \end{align}

   Now for each $a \in \{ 1, 2, \ldots, n - 1 \}$, using $\sigma_\alpha \sigma_\beta = - \delta_{\alpha, \beta} + \sum_{\gamma = 1}^3 \epsilon_{\alpha, \beta, \gamma} \sigma_\gamma$ and \cref{eq:nahm}, the corresponding summand in the third term can be rewritten as
   \begin{align}
      \cI_{3, a}  &\eqdef \| \sum\limits_{\alpha = 1}^3 \( \( i T_\alpha^{(a)} - x_\alpha \id_{\cx^{m_a}} \) \otimes \sigma_\alpha \) \uppsi_a \|_{L^2 \( \lambda_a, \lambda_{a + 1} \)}^2 \\
                  &= - \sum\limits_{\alpha = 1}^3 \sum\limits_{\beta = 1}^3 \Re \( \left\langle \uppsi_a \middle| \( \( i T_\alpha^{(a)} - x_\alpha \id_{\cx^{m_a}} \) \otimes \sigma_\alpha \) \( \( i T_\beta^{(a)} - x_\beta \id_{\cx^{m_a}} \) \otimes \sigma_\beta \) \uppsi_a \right\rangle_{L^2 \( \lambda_a, \lambda_{a + 1} \)} \) \\
                  &= - \sum\limits_{\alpha = 1}^3 \sum\limits_{\beta = 1}^3 \Re \( \left\langle \uppsi_a \middle| \( \( i T_\alpha^{(a)} - x_\alpha \id_{\cx^{m_a}} \) \( i T_\beta^{(a)} - x_\beta \id_{\cx^{m_a}} \) \otimes \( \sigma_\alpha \sigma_\beta \) \) \uppsi_a \right\rangle_{L^2 \( \lambda_a, \lambda_{a + 1} \)} \) \\
                  &= - \sum\limits_{\alpha = 1}^3 \sum\limits_{\beta = 1}^3 \( - \delta_{\alpha, \beta} \) \Re \( \left\langle \uppsi_a \middle| \( \( i T_\alpha^{(a)} - x_\alpha \id_{\cx^{m_a}} \) \( i T_\beta^{(a)} - x_\beta \id_{\cx^{m_a}} \) \otimes \id_\bH \) \uppsi_a \right\rangle_{L^2 \( \lambda_a, \lambda_{a + 1} \)} \) \\
                  & \quad - \sum\limits_{\alpha = 1}^3 \sum\limits_{\beta = 1}^3 \sum\limits_{\gamma = 1}^3 \epsilon_{\alpha, \beta, \gamma} \Re \( \left\langle \uppsi_a \middle| \( \( i T_\alpha^{(a)} - x_\alpha \id_{\cx^{m_a}} \) \( i T_\beta^{(a)} - x_\beta \id_{\cx^{m_a}} \) \otimes \sigma_\gamma \) \uppsi_a \right\rangle_{L^2 \( \lambda_a, \lambda_{a + 1} \)} \) \\
                  &= \sum\limits_{\alpha = 1}^3 \left\| \( i T_\alpha^{(a)} - x_\alpha \id_{\cx^{m_a}} \) \uppsi_a \right\|_{L^2 \( \lambda_a, \lambda_{a + 1} \)}^2 \\
                  & \quad - \sum\limits_{\alpha = 1}^3 \sum\limits_{\beta = 1}^3 \sum\limits_{\gamma = 1}^3 \epsilon_{\alpha, \beta, \gamma} \Re \( \left\langle \uppsi_a \middle| \( \( i T_\alpha^{(a)} - x_\alpha \id_{\cx^{m_a}} \) \( i T_\beta^{(a)} - x_\beta \id_{\cx^{m_a}} \) \otimes \sigma_\gamma \) \uppsi_a \right\rangle_{L^2 \( \lambda_a, \lambda_{a + 1} \)} \) \\
                  &= \sum\limits_{\alpha = 1}^3 \left\| \( i T_\alpha^{(a)} - x_\alpha \id_{\cx^{m_a}} \) \uppsi_a \right\|_{L^2 \( \lambda_a, \lambda_{a + 1} \)}^2 \\
                  & \quad + \sum\limits_{\alpha = 1}^3 \sum\limits_{\beta = 1}^3 \sum\limits_{\gamma = 1}^3 \tfrac{1}{2} \epsilon_{\alpha, \beta, \gamma} \Re \( \left\langle \uppsi_a \middle| \( \left[ T_\alpha^{(a)}, T_\beta^{(a)} \right] \otimes \sigma_\gamma \) \uppsi_a \right\rangle_{L^2 \( \lambda_a, \lambda_{a + 1} \)} \) \\
                  & \quad + \sum\limits_{\alpha = 1}^3 \sum\limits_{\beta = 1}^3 \sum\limits_{\gamma = 1}^3 \epsilon_{\alpha, \beta, \gamma} \Re \( \left\langle \uppsi_a \middle| \( \( - i \( x_\alpha T_\beta^{(a)} + x_\beta T_\alpha^{(a)} \) + x_\alpha x_\beta \id_{\cx^{m_a}} \) \otimes \sigma_\gamma \) \uppsi_a \right\rangle_{L^2 \( \lambda_a, \lambda_{a + 1} \)} \).
   \end{align}
   Using the antisymmetry of $\epsilon_{\alpha, \beta, \gamma}$, the sum in the last line vanishes. Furthermore, using Nahm's \cref{eq:nahm}, we find
   \begin{equation}
      \cI_2 + \cI_3 = \sum\limits_{a = 1}^{n - 1} \sum\limits_{\alpha = 1}^3 \| \( \( i T_\alpha^{(a)} - x_\alpha \id_{\cx^{m_a}} \) \otimes \id_\bH \) \uppsi_a \|_{L^2 \( \lambda_a, \lambda_{a + 1} \)}^2 - \sum\limits_{a = 2}^{n - 1} \left| \fB_a \( \upphi_a \) \right|^2. \label{eq:half_of_weitzenbock}
   \end{equation}
   Since $\| \widetilde{\D}_x^\cN \( \Psi \) \|_{L^2 \( \bT \)}^2 = \cI_1 + \cI_2 + \cI_3$ and \cref{eq:half_of_weitzenbock} holds, we get that
   \begin{equation}
      \| \widetilde{\D}_x^\cN \( \Psi \) \|_{L^2 \( \bT \)}^2 = \sum\limits_{a = 1}^{n - 1} \| \dot{\uppsi}_a \|_{L^2 \( \lambda_a, \lambda_{a + 1} \)}^2 + \sum\limits_{a = 1}^{n - 1} \sum\limits_{\alpha = 1}^3 \| \( \( i T_\alpha^{(a)} - x_\alpha \id_{\cx^{m_a}} \) \otimes \id_\bH \) \uppsi_a \|_{L^2 \( \lambda_a, \lambda_{a + 1} \)}^2. \label{eq:weitzenbock}
   \end{equation}

   For the rest of the paper, let $r \eqdef |x|$ be the radial coordinate and let us recall Hardy's inequality: if $u \in L_1^2 \( \left[ 0, \epsilon \right] \)$, such that $u (0) = 0$, then
   \begin{equation}
      \int\limits_0^\epsilon \frac{|u (t)|^2}{t^2} \rd t \leqslant 4 \int\limits_0^\epsilon |\dot{u} (t)|^2 \rd t. \label[ineq]{ineq:hardy}
   \end{equation}
   Using \cref{ineq:hardy}, we get that there is a positive number $K^\cN$, such that
   \begin{equation}
      \( 0 \leqslant \) \sum\limits_{a = 1}^{n - 1} \sum\limits_{\alpha = 1}^3 \| \( \( i T_\alpha^{(a)} - x_\alpha \id_{\cx^{m_a}} \) \otimes \id_\bH \) \uppsi_a \|_{L^2 \( \lambda_a, \lambda_{a + 1} \)}^2 \leqslant \( K^\cN + r^2 \) \| \Psi \|_{L_1^2 \( \bT \)}^2. \label[ineq]{ineq:hardy_for_nahm}
   \end{equation}

   Using \cref{eq:H1_norm,ineq:hardy_for_nahm} and that $L_1^2$ embeds into $C^0$ in one dimension, this yields, after potentially increasing $K^\cN$, that
   \begin{equation}
      \| \widetilde{\D}_x^\cN \( \Psi \) \|_{L^2 \( \bT \)}^2 \leqslant \( K^\cN + r^2 \) \| \Psi \|_{L_1^2 \( \bT \)}^2. \label[ineq]{ineq:D_x_norm}
   \end{equation}
   Since the above formula holds on a dense subset of $L_1^2 \( \bT \)$, the claim about the unique and continuous extension, $\D_x^\cN$, is true. Moreover, we have $\| \D_x^\cN \| \leqslant \sqrt{K^\cN + r^2}$. The first claim is now proved.

   Moreover, \cref{eq:weitzenbock} implies that if $\D_x^\cN \( \Psi \) = 0$, then for all relevant $a , \alpha$, we get
   \begin{equation}
      \dot{\uppsi}_a = 0, \quad \( \( i T_\alpha^{(a)} - x_\alpha \id_{\cx^{m_a}} \) \otimes \id_\bH \) \uppsi_a = 0.
   \end{equation}
   Thus, for an irreducible Nahm data, $\uppsi_a$ must vanish, and hence $\ker \( \D_x^\cN \)$ is trivial. The second claim is now proved.

   Furthermore, \cref{eq:weitzenbock} combined with \cite{buhler_functional_2018}*{Lemma 4.3.9} and the compactness of $L_1^2 \hookrightarrow L^2$ in dimension one, yields that the image of $\D_x^\cN$ is closed in $L^2 \( \bT \)$. Thus, to prove the third claim, it is enough to prove that $\dim \( \coker \( \D_x^\cN \) \) = N$. Since $L^2 \( \bT \)$ and $L_1^2 \( \bT \)$ are Hilbert spaces and $\image \( \D_x^\cN \)$ is closed in $L^2 \( \bT \)$, we can canonically identify the cokernel of $\D_x^\cN$ with the kernel of $\( \D_x^\cN \)^*$. First, let us compute the action of $\( \D_x^\cN \)^*$ on a $\Psi = \( \( \uppsi_a \)_{a = 1}^{n - 1}, \( \upphi_a \)_{a = 2}^{n - 1} \) \in L^2 \( \bT \) \cap L_1^2 \( \bT \)$. 
   For convenience, for all $b \in \left\{ 2, 3, \ldots, n - 1 \right\}$, let
   \begin{equation}
      \tr_\bT \( \Psi \)_b \eqdef \lim\limits_{\epsilon \rightarrow 0^+} \Pi_{Y_b^+ \oplus Z_b^+} \( i \( C_b^* \otimes \id_\bH \) \( \uppsi_b \( \lambda_b + \epsilon \) \) - i \uppsi_{b - 1} \( \lambda_b - \epsilon \) \) + \fB_b \( \upphi_b \) \in Y_b^+ \oplus Z_b^+,
   \end{equation}
   and $\tr_\bT \( \Psi \) \eqdef \( \tr_\bT \( \Psi \)_b \)_{b = 2}^{n - 1}$. Then for all $\widetilde{\Psi} = \( \widetilde{\uppsi}_a \)_{a = 1}^{n - 1} \in L_1^2 \( \bT \)$ we have, using integration by parts and \cref{cond:nahm1,cond:nahm2,cond:nahm3}, that
   \begin{align}
      \left\langle \widetilde{\Psi} \middle| \( \D_x^\cN \)^* \( \Psi \) \right\rangle_{L_1^2 \( \bT \)} &= \left\langle \D_x^\cN \( \widetilde{\Psi} \) \middle| \Psi \right\rangle_{L^2 \( \bT \)} \\
         &= \sum\limits_{a = 1}^{n - 1} \int\limits_{\lambda_a}^{\lambda_{a + 1}} h^a \( i \dot{\widetilde{\uppsi}}_a (t) + \sum\limits_{\alpha = 1}^3 \( \( i T_\alpha^{(a)} (t) - x_\alpha \id_{\cx^{m_a}} \) \otimes \sigma_\alpha \) \widetilde{\uppsi}_a (t), \uppsi_a (t) \) \rd t \\
         & \quad + \sum\limits_{b = 2}^{n - 1} h^{a - 1} \( \fB_b \( \lim\limits_{\epsilon \rightarrow 0^+} \widetilde{\uppsi}_{b - 1} \( \lambda_a - \epsilon \) \), \upphi_b \) \\
         &= \sum\limits_{a = 1}^{n - 1} \int\limits_{\lambda_a}^{\lambda_{a + 1}} h^a \( \widetilde{\uppsi}_a (t), i \dot{\uppsi}_a (t) - \sum\limits_{\alpha = 1}^3 \( \( i T_\alpha^{(a)} (t) - x_\alpha \id_{\cx^{m_a}} \) \otimes \sigma_\alpha \) \uppsi_a (t) \) \rd t \\
         & \quad + \sum\limits_{b = 2}^{n - 1} \lim\limits_{\epsilon \rightarrow 0^+} h^{b - 1} \( \widetilde{\uppsi}_{b - 1} \( \lambda_b - \epsilon \), \tr_\bT \( \Psi \)_b \).
   \end{align}
   Now if $\Psi = \( \( \uppsi_a \)_{a = 1}^{n - 1}, \( \upphi_b \)_{b = 2}^{n - 1} \) \in \ker \( \( \D_x^\cN \)^* \)$, then it has to satisfy
   \begin{subequations}
   \begin{equation}
      0 = i \dot{\uppsi}_a - \sum\limits_{\alpha = 1}^3 \( \( i T_\alpha^{(a)} - x_\alpha \id_{\cx^{m_a}} \) \otimes \sigma_\alpha \) \uppsi_a, \label{eq:strong_ODE}
   \end{equation}
   on $\( \lambda_a, \lambda_{a + 1} \)$, and in particular the weak derivatives $\dot{\uppsi}_a$ exist. In fact, by elliptic bootstrap, $\uppsi_a$ is smooth. Furthermore, $\Psi$ must also satisfy $\tr_\bT \( \Psi \) = 0$, or equivalently, for all $b \in \{ 2, \ldots, n - 1 \}$, we have
   \begin{equation}
      \lim\limits_{\epsilon \rightarrow 0^+} \( \( C_b^* \otimes \id_\bH \) \( \uppsi_b \( \lambda_b + \epsilon \) \) - \Pi_{Y_b^+ \oplus Z_b^+} \( \uppsi_{b - 1} \( \lambda_b - \epsilon \) \) \) = i \fB_b \( \upphi_b \) \in Y_b^+. \label{eq:pre_BVP_a}
   \end{equation}
   \end{subequations} 

   By the Picard--Lindel\"of Theorem, we have a $2 m_a$-dimensional space of solutions to \cref{eq:strong_ODE}. The theory of ordinary differential equations with regular singularities, as used in \cites{hitchinMonopoles,hurtubise_construction_1989,Cherkis-Larrain-Stern-2}, tells us that this $2 m_a$-dimensional space contains various subspaces, as follows: there is a $\( 2 m_a - 2 k_a^- \)$-dimensional space of solutions that have right limits in $Y_a^- \oplus Z_a^-$ at $\lambda_a$ from the right. Furthermore, for each summand $R_{k_{a, b}^-}$ in \cref{eq:reduction} there is a $\( k_{a, b}^- + 1 \)$-dimensional space of solutions that are $O \( \( t - \lambda_a \)^{\( k_{a, b}^- - 1 \) / 2} \)$, thus are in $L^2$ and satisfy \cref{cond:nahm1}, and another $\( k_{a, b}^- - 1 \)$-dimensional space of solutions that are $O \( \( t - \lambda_a \)^{- \( k_{a, b}^- + 1 \) / 2} \)$, and thus not in $L^2$, near $\lambda_a$ from the right.

   Similarly, there is a $\( 2 m_a - 2 k_{a + 1}^+ \)$-dimensional space of solutions that have left limits in $Y_a^+ \oplus Z_a^+$ at $\lambda_{a + 1}$ from the left. Furthermore, for each summand $R_{k_{a + 1, b}^+}$ in \cref{eq:reduction} there is a $\( k_{a + 1, b}^+ + 1 \)$-dimensional space of solutions that are $O \( \( \lambda_{a + 1} - t \)^{\( k_{a+1, b}^+ - 1 \) / 2} \)$, thus are in $L^2$ and satisfy \cref{cond:nahm2}, and another $\( k_{a + 1, b}^+ - 1 \)$-dimensional space of solutions that are $O \( \( \lambda_{a + 1} - t \)^{- \( k_{a+1, b}^+ + 1 \) / 2} \)$, and thus not in $L^2$, near $\lambda_{a + 1}$ from the left.

   In summary, there is a $\( 2 m_a - k_a^- + r_a^- \)$-dimensional space of solutions that are in $L^2$ near $\lambda_a$ from the left and all of these have limits in $Y_a^- \oplus Z_a^-$ and there is a $\( 2 m_a - k_{a + 1}^+ + r_{a + 1}^+ \)$-dimensional space of solutions that are in $L^2$ near $\lambda_{a + 1}$ from the right and all of these have limits in $Y_{a + 1}^+ \oplus Z_{a + 1}^+$. For all relevant $a$, let $X_{a, 1}^\pm \leqslant X_a^\pm$ be the the subspace corresponding to rank-$1$ representations in \cref{eq:reduction}. Using \cref{eq:pre_BVP_a}, we have that solution that are in $L^2$ satisfy, for all $a \in \{ 2, \ldots, n - 1 \}$, that
   \begin{subequations}
   \begin{align}
      \lim\limits_{\epsilon \rightarrow 0^+} \uppsi_1 \( \lambda_1 + \epsilon \)                                                                                                                &\in X_{1, 1}^-, \label{eq:BVP_1} \\
      \lim\limits_{\epsilon \rightarrow 0^+} \( \( C_a^* \otimes \id_\bH \) \( \uppsi_a \( \lambda_a + \epsilon \) \) - \uppsi_{a - 1} \( \lambda_a - \epsilon \) \) - i \fB_a \( \upphi_a \)   &\in X_{a, 1}^+, \label{eq:BVP_a_+} \\
      \lim\limits_{\epsilon \rightarrow 0^+} \( \uppsi_a \( \lambda_a + \epsilon \) - \( C_a \otimes \id_\bH \) \( \uppsi_{a - 1} \( \lambda_a - \epsilon \) - i \fB_a \( \upphi_a \) \) \)     &\in X_{a, 1}^-, \label{eq:BVP_a_-} \\
      \lim\limits_{\epsilon \rightarrow 0^+} \uppsi_{n - 1} \( \lambda_n - \epsilon \)                                                                                                          &\in X_{n, 1}^+, \label{eq:BVP_n}
   \end{align}
   \end{subequations}
   Informally, \cref{eq:BVP_1,eq:BVP_a_+,eq:BVP_a_-,eq:BVP_n} can be summarized as follows:
   \begin{itemize}
      \item Components in $X_a^\pm \cap \( X_{a, 1}^\pm \)^\perp$ must converge to zero at the endpoints.
      \item Components in $X_{a, 1}^\pm \oplus Y_a^\pm$ must converge, but there are no further restrictions, and $\upphi_a$ is given by these limits.
      \item Components in $Z_a^\pm$ must converge and are $C_a$-continuous, in the sense of \cref{eq:BVP_a_+,eq:BVP_a_-}.
   \end{itemize}
   For each $a \in \{ 1, 2, \ldots, n - 1 \}$, let $\cV_a$ be the $2 m_a$-dimensional space of solutions to \cref{eq:strong_ODE}. Let $\cU_1 \subseteq \cV_1$ be the $\( k_1^- + r_1^- \)$-dimensional subspace of solutions that are $L^2$ near $\lambda_1$. For each $a \in \{ 2, 3, \ldots, n - 1 \}$, let $\cU_a \subseteq \cV_{a - 1} \times \cV_a$ be the subspace of pairs of solutions that are both $L^2$ near $\lambda_a$ and satisfy \cref{eq:BVP_a_+,eq:BVP_a_-}, and let $\cU_n \subseteq \cV_{n - 1}$ be the $\( k_n^+ + r_n^+ \)$-dimensional subspace of solutions that are $L^2$ near $\lambda_n$. By the above discussion, for all $a \in \{ 2, 3, \ldots, n - 2 \}$, we have
   \begin{equation}
      \dim \( \cU_a \) = \underbrace{2 m_{a - 1} - 2 k_a^+}_{\textnormal{continuous at $\lambda_a$}} + \underbrace{k_a^+ + r_a^+}_{\substack{\textnormal{vanishing from} \\ \textnormal{the left at $\lambda_a$}}} + \underbrace{k_a^- + r_a^-}_{\substack{\textnormal{vanishing from} \\ \textnormal{the right at $\lambda_a$}}} + \underbrace{r_a^0}_{\textnormal{jumping at $\lambda_a$}} = m_{a - 1} + m_a + r_a.
   \end{equation}

   Finally, let us define the map
   \begin{equation}
      \cD \colon \bigoplus_{a = 1}^n \cU_a \rightarrow \bigoplus_{a = 1}^{n - 1} \cV_a; \ \( \uppsi_1^+, \( \uppsi_1^-, \uppsi_2^+ \), \ldots, \( \uppsi_{n - 2}^-, \uppsi_{n - 1}^+ \), \uppsi_{n - 1}^- \) \mapsto \( \uppsi_a^+ - \uppsi_a^- \)_{a = 1}^{n - 1}.
   \end{equation}
   Since $\cD$ is a map between finite dimensional vector spaces, its index is (using that $r_1^- = r_1$ and $r_n^+ = r_n$)
   \begin{align}
      \index \( \cD \)  &= \dim \( \bigoplus_{a = 1}^n \cU_a \) - \dim \( \bigoplus_{a = 1}^{n - 1} \cV_a \) \\
                        &= \sum\limits_{a = 1}^n \dim \( \cU_a \) - \sum\limits_{a = 1}^{n - 1} \dim \( \cV_a \) \\
                        &= \( k_1^- + r_1^- \) + \sum\limits_{a = 2}^{n - 1} \( m_{a - 1} + m_a + r_a \) + \( k_n^+ + r_n^+ \) - \sum\limits_{a = 1}^{n - 1} 2 m_a \\
                        &= \sum\limits_{a = 1}^n r_a \\
                        &= N.
   \end{align}
   Furthermore, $\ker \( \cD \) \cong \ker \( \( \D_x^\cN \)^* \)$ via the map
   \begin{equation}
      \ker \( \cD \) \rightarrow \ker \( \( \D_x^\cN \)^* \); \ \( \uppsi_1^+, \( \uppsi_1^-, \uppsi_2^+ \), \ldots, \( \uppsi_{n - 2}^-, \uppsi_{n - 1}^+ \), \uppsi_{n - 1}^- \) \mapsto \( \uppsi_a^+ \)_{a = 1}^{n - 1}.
   \end{equation}
   Hence, it is enough to show that $\coker \( \cD \)$ is trivial. For each $a \in \{ 1, 2, \ldots, n - 1 \}$, let $\widetilde{\cV}_a$ be the $2 m_a$-dimensional space of solutions to
   \begin{equation}
      i \dot{\eta}_a + \sum\limits_{\alpha = 1}^3 \( \( i T_\alpha^{(a)} - x_\alpha \id_{\cx^{m_a}} \) \otimes \sigma_\alpha \) \eta_a = 0,
   \end{equation}
   on $\( \lambda_a, \lambda_{a + 1} \)$. If $\eta_a \in \widetilde{\cV}_a$ and $\uppsi_a \in \cV_a$, then $h^a \( \eta_a, \uppsi_a \)$ is constant. In fact, using the Picard--Lindel\"of Theorem again, we get that $\widetilde{\cV}_a \cong \cV_a^*$ via $\eta_a \mapsto \( \uppsi_a \mapsto h^a \( \eta_a, \uppsi_a \)|_{t = t_0} \)$, for arbitrary $t_0 \in \( \lambda_a, \lambda_{a + 1} \)$. Now we have an isomorphism
   \begin{equation}
      \bigoplus_{a = 1}^{n - 1} \widetilde{\cV}_a \cong \( \bigoplus_{a = 1}^{n - 1} \cV_a \)^*. \label{eq:iso_1}
   \end{equation}
   Furthermore, we have that
   \begin{equation}
      \( \coker \( \cD \) \)^* \cong \left\{ \ f \in \( \bigoplus_{a = 1}^{n - 1} \cV_a \)^* \ \middle| \ f|_{\image \( \cD \)} = 0 \ \right\} \label{eq:iso_2}
   \end{equation}
   through the isomorphism  $F \mapsto F \circ \pi$ defined using the  canonical projection
   \begin{equation}
      \pi \colon \bigoplus_{a = 1}^{n - 1} \cV_a \rightarrow \( \bigoplus_{a = 1}^{n - 1} \cV_a \) \big/ \image \( \cD \).
   \end{equation}
   The  argument used in \cite{hurtubise_construction_1989}*{middle of page 79} shows that under the isomorphisms given in \cref{eq:iso_1,eq:iso_2}, $\( \coker \( \cD \) \)^*$ is identified with $\ker \( \D_x^\cN \)$, which we have already shown to be trivial. The second and third claims are now proved.

   Finally, for all $x, y \in \rl^3$, the operator $\D_x^\cN - \D_y^\cN$ is just the left-multiplication by the imaginary quaternion $\sum_{\alpha = 1}^3 \( x_\alpha - y_\alpha \) \sigma_\alpha$. In particular, $\| \D_x^\cN - \D_y^\cN \| = |x - y|$, and thus the assignment $x \mapsto \D_x^\cN$ is continuous and linear, and thus analytic. The proof is now complete.
\end{proof}

\smallskip

Next we construct (possibly well-known) solutions to a model equation for \cref{eq:strong_ODE}. Similar analysis was done in \cite{Cherkis-Larrain-Stern-2}*{Section~5.2}. 

Regard $\bH \cong \cx^2$ as the defining, irreducible representation of $\su (2)$. Let $\vee$ be the symmetrized tensor product on the (complex) tensor algebra of $\bH$. For each $k \in \Z_+$, we regard $\cx^k$ as the $k$-dimensional irreducible representation of $\su (2)$ in the following way:
\begin{equation}
   \cx^k \cong \left\{ \begin{array}{ll} \cx, & \mbox{if } k = 1, \\ \bH, & \mbox{if } k = 2, \\ \bigvee^k \bH = \spn_\cx \( \left\{ \: w_1 \vee w_2 \vee \cdots \vee w_{k - 1} \: \middle | \: w_1, w_2, \ldots, w_{k - 1} \in \bH \: \right\} \), & \mbox{if } k \geqslant 3, \end{array} \right.
\end{equation}
and for all $\sigma \in \su (2)$, we $\varrho_k \( \sigma \)$ act on $\cx^k$ via the extension of $\sigma$ as a derivation. Now let
\begin{equation}
   \widehat{C}_k \eqdef \sum\limits_{\alpha = 1}^3 \varrho_k \( \sigma_\alpha \) \otimes \sigma_\alpha \quad \& \quad \widehat{x} \eqdef \sum\limits_{\alpha = 1}^3 x_\alpha \sigma_\alpha.
\end{equation}

The model equation for \cref{eq:strong_ODE} is given as follows: for each $k \in \Z_+$ and $x \in \rl^3$, let us consider the first order differential equation for smooth $\psi \colon I \rightarrow \cx^k \otimes \bH$ of the form
\begin{equation}
   \dot{\uppsi} + \tfrac{1}{2 t} \widehat{C}_k \( \uppsi \) - i \id_{\cx^k} \otimes \widehat{x} \( \uppsi \) = 0, \label{eq:model_eq}
\end{equation}
where $I \subseteq \rl$ is an open interval.

\medskip

\begin{lemma}
   \label{lemma:simultaneous_EVEs}
   For each $k \in \Z_+$ and $x \in \rl^3 - \left\{ \textnormal{o} \right\}$, there exists vectors, $u \( k, x \) \in \cx^k \otimes \bH$, such that the following hold:
   \begin{subequations}
   \begin{align}
      \widehat{C}_k \( u \( k, x \) \)  &= \( 1 - k \) u \( k, x \), \label[cond]{cond:u_cond_1} \\
      \widehat{x} \( u \( k, x \) \)    &= ir u \( k, x \), \label[cond]{cond:u_cond_2} \\
      \left| u \( k, x \) \right|       &= 1. \label[cond]{cond:u_cond_3}
   \end{align}
   \end{subequations}
   Moreover $u \( k, x \)$ is unique up to a $\rU (1)$ factor. In fact, the assignment
   \begin{equation}
      \rl^3 - \left\{ \textnormal{o} \right\} \ni x \mapsto \spn_\cx \( \left\{ u \( k, x \) \right\} \),
   \end{equation}
   defines a Hermitian line bundle of Chern number $- k$.
\end{lemma}

\begin{proof}
   For all $x \in \rl^3 - \left\{ \textnormal{o} \right\}$ with $r = |x|$, let $\cL_x \subset \bH$ be the $i r$-eigenline of $\widehat{x}$. Then the Hermitian line bundle $\( \cL_x \)_{x \in \rl^3 - \left\{ \textnormal{o} \right\}}$ is the anti-Hopf fibration, thus has Chern number $- 1$.

   Let $w (x) \in \cL_x$ be a unit length vector. Let
   \begin{equation}
      u \( k, x \) \eqdef \( \vee^{k - 1} w (x) \) \otimes w (x) \in \cx^k \otimes \bH. \label{eq:u_kx}
   \end{equation}
   Here $\vee^0 w (x) = 1 \in \cx$ and $\vee^1 w (x) = w (x) \in \bH$. Now for all $k \in \Z_+$, the vector $u \( k, x \)$ trivially satisfies \cref{cond:u_cond_2,cond:u_cond_3}, so the only thing left to show is \cref{cond:u_cond_1}:
   \begin{align}
      \widehat{C}_k \( u \( k, x \) \)  &= \( \sum\limits_{\alpha = 1}^3 \varrho_k \( \sigma_\alpha \) \otimes \sigma_\alpha \) \( \( \vee^{k - 1} w (x) \) \otimes w (x) \) \\
         &= \sum\limits_{\alpha = 1}^3 \varrho_k \( \sigma_\alpha \) \( \vee^{k - 1} w (x) \) \otimes \sigma_\alpha \( w (x) \) \\
         &= \( k - 1 \) \( \vee^{k - 2} w (x) \) \vee \( \sum\limits_{\alpha = 1}^3 \sigma_\alpha \( w (x) \) \otimes \sigma_\alpha \( w (x) \) \) \\
         &= \( k - 1 \) \( \vee^{k - 2} w (x) \) \vee \( - \sum\limits_{\alpha = 1}^3 w (x) \otimes w (x) \) \\
         &= \( 1 - k \) u \( k, x \).
   \end{align}
   The uniqueness of $\spn_\cx \( \left\{ u \( k, x \) \right\} \)$ follows from the fact that \cref{cond:u_cond_1} implies that $u \( k, x \)$ ought to be in the $\( k + 1 \)$-dimensional irreducible component of $\cx^k \otimes \bH \cong \cx^{k + 1} \oplus \cx^{k - 1}$, while \cref{cond:u_cond_2} implies that $u \( k, x \)$ ought to be an elementary tensor. It is easy to verify that these two requirements can only be satisfied by a vector of the form \eqref{eq:u_kx}. Thus, $\( \spn_\cx \( \left\{ u \( k, x \) \right\} \) \)_{x \in \rl^3 - \left\{ \textnormal{o} \right\}}$ is a Hermitian line bundle that equals to $\cL^{\otimes k}$, which proves the claim about the Chern numbers. 
\end{proof}

\smallskip

\begin{remark}\label{remark:simultaneous_EVEsnegativek}
   Suppose now $k \in \Z_-$. The anti-unitary isomorphism, $J$, in \cref{eq:quaternionic_str}, extends to $\cx^{|k|}\otimes \bH$ and satisfies $J \circ \widehat{x} = \widehat{x} \circ J$. Thus, if we set $u \( k, x \) \eqdef J \( u \( |k|, x \) \)$, then $u \( k, x \)$ is the unique (up to a $\rU (1)$ factor) vector that satisfies
   \begin{subequations}
   \begin{align}
      \widehat{C}_{|k|} \( u \( k, x \) \)   &= \( 1 - |k| \) u \( k, x \), \label[cond2]{cond2:u_cond_1} \\
      \widehat{x} \( u \( k, x \) \)         &= - ir u \( k, x \), \label[cond2]{cond2:u_cond_2} \\
      \left| u \( k, x \) \right|            &= 1, \label[cond2]{cond2:u_cond_3}
   \end{align}
   \end{subequations}
   and that the assignment
   \begin{equation}
      \rl^3 - \left\{ \textnormal{o} \right\} \ni x \mapsto \spn_\cx \( \left\{ u \( k, x \) \right\} \),
   \end{equation}
   defines a Hermitian line bundle of Chern number $k$.
\end{remark}

Now we can prove the key lemma for constructing approximate solutions.

\begin{lemma}
   \label{lemma:model_solution_k_not_zero}
   For $k \in \Z_+$, let
   \begin{equation}
      \uppsi_{k, x} \colon \rl_+ \rightarrow \cx^k \otimes \bH; \ t \mapsto \tfrac{\( 2 r \)^{\frac{k}{2}}}{\sqrt{\( k - 1 \)!}} t^{\frac{k - 1}{2}} e^{- r t} u \( k, x \).
   \end{equation}
   Then $\uppsi_{k, x}$ solves the model \cref{eq:model_eq} with $I = \rl_+$, satisfies $\| \uppsi_{k, x} \|_{L^2 \( \rl_+ \)} = 1$, and
   \begin{equation}
      \int\limits_0^\infty t |\uppsi_{k, x} (t)|^2 \rd t = \tfrac{k}{2 r}. \label{eq:model_Higgs_+}
   \end{equation}
   For $k \in \Z_-$, let
   \begin{equation}
      \uppsi_{k, x} \colon \rl_- \rightarrow \cx^k \otimes \bH; \ t \mapsto J \( \uppsi_{|k|, x} (- t) \).
   \end{equation}
   Then $\uppsi_{k, x}$ solves the model \cref{eq:model_eq} with $I = \rl_-$, satisfies $\| \uppsi_{k, x} \|_{L^2 \( \rl_- \)} = 1$, and
   \begin{equation}
      \int\limits_{- \infty}^0 t |\uppsi_{k, x} (t)|^2 \rd t = \tfrac{k}{2 r}. \label{eq:model_Higgs_-}
   \end{equation}
\end{lemma}

\begin{proof}
	The fact that $\uppsi_{k, x}$ solves the model \cref{eq:model_eq} is immediate. If $k \in \Z_+$, then, since $\left| u \( k, x \) \right| = 1$, we have
   \begin{align}
      \int\limits_0^\infty t |\uppsi_{k, x} (t)|^2 \rd t &= \int\limits_0^\infty t \tfrac{\( 2 r \)^k}{\( k - 1 \)!} t^{k - 1} e^{- 2 r t} \rd t \\
         &= \tfrac{k}{2 r} \underbrace{\tfrac{1}{k!} \int\limits_0^\infty \( 2 r t \)^k e^{- \( 2 r t \)} \rd \( 2 r t \)}_{= 1} \\
         &= \tfrac{k}{2 r},
   \end{align}
   yielding \cref{eq:model_Higgs_+}. The case of negative $k$ follows analogously.
\end{proof}

\smallskip
\begin{remark}
   Note that $\uppsi_{\pm 1, x}$ does not vanish in the $t \rightarrow 0^\mp$ limit, as opposed to the $|k| > 1$ cases.
\end{remark}

\begin{remark}
   \label{remark:Dirac_mono}
   Fix $k \in \Z - \{ 0 \}$. If $k > 0$, then let
   \begin{equation}
      \rl^3 - \left\{ \textnormal{o} \right\} \ni x \mapsto \spn_\cx \( \left\{ \uppsi_{k, x} \right\} \) \defeq \cL_x^{(k)} \subset L^2 \( \rl_+, \cx^k \otimes \bH \),
   \end{equation}
   and if $k < 0$, then let
   \begin{equation}
      \rl^3 - \left\{ \textnormal{o} \right\} \ni x \mapsto \spn_\cx \( \left\{ \uppsi_{k, x} \right\} \) \defeq \cL_x^{(k)} \subset L^2 \( \rl_-, \cx^{-k} \otimes \bH \).
   \end{equation}
   Now for all $k \in \Z - \{ 0 \}$, $\cL^{(k)}$ defines a Hermitian line bundle of Chern number $- k$. Let $\nabla^k$ be the induced (Berry) connection and $\varphi^k \in \Gamma \( \End \( \cL^{(k)} \) \)$ be given by $\varphi^k \( \psi \)_x \eqdef \Pi_{\cL_x^{(k)}} \( M \psi \)$, where $M$ is the operator of multiplication by $t$. Then $\( \nabla^k, \varphi^k \)$ is the \emph{Dirac monopole} of charge $k$, centered at the origin.
\end{remark}

\medskip

We end this section with a technical lemma about the inverse of the operator $\( \D_x^\cN \)^* \D_x^\cN$. The proof of \cref{eq:weitzenbock} can easily be generalized to prove that for a smooth element $\Psi = \( \uppsi_a \)_{a = 1}^{n - 1} \in L_1^2 \( \bT \)$ we have
\begin{equation}
   H_x^\cN \( \Psi \) \eqdef \( \D_x^\cN \)^* \D_x^\cN \( \Psi \) = \( - \ddot{\uppsi}_a + \sum\limits_{\alpha = 1}^3 \( \( i T_\alpha^{(a)} - x_\alpha \id_{\cx^{m_a}} \)^2 \otimes \id_\bH \) \uppsi_a \)_{a = 1}^{n - 1}. \label{eq:second_order_ODE}
\end{equation}
Note that \cref{ineq:D_x_norm} implies $H_x^\cN$ has a continuous inverse (when regarded as a map to its topological dual). We denote by $G_x^\cN$ the restriction of this inverse to $L^2 \( \bT \)$, acting as zero on $\bigoplus\limits_{b = 1}^{n - 2} Y_b^+$.

\begin{lemma}
   \label{lemma:Greens_bound}
   The operator norm of $G_x^\cN$ depends on $x \in \rl^3$ analytically and satisfies
   \begin{equation}
      \| G_x^\cN \| = O \( r^{- 2} \). \label{eq:Greens_bound}
   \end{equation}
\end{lemma}

\begin{proof}
   Since $x \mapsto H_x^\cN$ is analytic by \Cref{theorem:fredholm}, the first claim is immediate.

   Let $\Psi \in L_1^2 \( \bT \)$. First we show that there is a positive number $C$, only dependent on $\cN$, such that
   \begin{equation}
      \| \D_x^\cN \( \Psi \) \|_{L^2 \( \bT \)}^2 \geqslant C r^2 \| \Psi \|_{L^2 \( \bT \)}^2. \label[ineq]{ineq:D_Psi_lower_bound_1}
   \end{equation}
   Let us write $\Psi = \( \uppsi_a \)_{a = 1}^{n - 1}$ and use \cref{eq:weitzenbock} and the Peter--Paul inequality (with $\delta > 0$) to get
   \begin{align}
      \| \D_x^\cN \( \Psi \) \|_{L^2 \( \bT \)}^2  &= \sum\limits_{a = 1}^{n - 1} \| \dot{\uppsi}_a \|_{L^2 \( \lambda_a, \lambda_{a + 1} \)}^2 + \sum\limits_{a = 1}^{n - 1} \sum\limits_{\alpha = 1}^3 \| \( \( i T_\alpha^{(a)} - x_\alpha \id_{\cx^{m_a}} \) \otimes \id_\bH \) \uppsi_a \|_{L^2 \( \lambda_a, \lambda_{a + 1} \)}^2 \\
         &= \sum\limits_{a = 1}^{n - 1} \| \dot{\uppsi}_a \|_{L^2 \( \lambda_a, \lambda_{a + 1} \)}^2 + r^2 \| \Psi \|_{L^2 \( \bT \)} + \overbrace{\sum\limits_{a = 1}^{n - 1} \sum\limits_{\alpha = 1}^3 \| \( i T_\alpha^{(a)} \otimes \id_\bH \) \uppsi_a \|_{L^2 \( \lambda_a, \lambda_{a + 1} \)}^2}^{I \eqdef} \\
         & \quad - 2 \sum\limits_{\alpha = 1}^3 \sum\limits_{a = 1}^{n - 1} \Re \( \langle \( i T_\alpha^{(a)} \otimes \id_\bH \) \uppsi_a | x_\alpha \uppsi_a \rangle_{L^2 \( \lambda_a, \lambda_{a + 1} \)} \) \\
         &\geqslant \sum\limits_{a = 1}^{n - 1} \| \dot{\uppsi}_a \|_{L^2 \( \lambda_a, \lambda_{a + 1} \)}^2 + r^2 \| \Psi \|_{L^2 \( \bT \)}^2 + I - 2 \( \tfrac{r^2 \delta}{2} \| \Psi \|_{L^2 \( \bT \)}^2 + \tfrac{1}{2 \delta} I \) \\
         &= \sum\limits_{a = 1}^{n - 1} \| \dot{\uppsi}_a \|_{L^2 \( \lambda_a, \lambda_{a + 1} \)}^2 + \( 1 - \delta \) r^2 \| \Psi \|_{L^2 \( \bT \)}^2 + \( 1 - \tfrac{1}{\delta} \) I. \label{ineq:D_Psi_lower_bound_2}
   \end{align}
   Using Hardy's \cref{ineq:hardy}, we also get that there is a positive number, $K$, only dependent on $\cN$, such that $\sum_{a = 1}^{n - 1} \| \dot{\uppsi}_a \|_{L^2 \( \lambda_a, \lambda_{a + 1} \)}^2 \geqslant K I$, and thus \cref{ineq:D_Psi_lower_bound_2} becomes
   \begin{align}
      \| \D_x^\cN \( \Psi \) \|_{L^2 \( \bT \)}^2 \geqslant \( 1 - \delta \) r^2 \| \Psi \|_{L^2 \( \bT \)}^2 + \( K + 1 - \tfrac{1}{\delta} \) I.
   \end{align}
   Now choosing $\delta \eqdef \tfrac{1}{1 + K}$ and $C \eqdef \tfrac{K}{1 + K}$ proves \cref{ineq:D_Psi_lower_bound_1}. Let $\Upsilon = H_x \( \Psi \) \Leftrightarrow \Psi = G_x^\cN \( \Upsilon \)$ and assume that $\| \Upsilon \|_{L^2 \( \bT \)} = 1$. Since $G_x$ is a positive, self-adjoint operator on $L^2 \( \bT \)$, we have that
   \begin{equation}
      \| G_x^\cN \| \geqslant \langle G_x^\cN \( \Upsilon \) | \Upsilon \rangle = \langle \Psi | H_x \( \Psi \) \rangle = \| \Psi \|_{L^2 \( \bT \)}^2 \geqslant C r^2 \| \Psi \|_{L^2 \( \bT \)}^2 = C r^2 \| G_x^\cN \( \Upsilon \) \|_{L^2 \( \bT \)}^2.
   \end{equation}
   By choosing a sequence of $\Upsilon$'s, so that $\| G_x^\cN \( \Upsilon \) \|_{L^2 \( \bT \)} \rightarrow \| G_x^\cN \|$, we get that $\| G_x^\cN \| \leqslant \tfrac{1}{C r^2}$, which completes the proof.
\end{proof}

\bigskip

\section{The main theorem}
\label{sec:main}

We are now ready to state and prove our main theorem.

\begin{theorem}
\label{theorem:nahm}
   The pair $\( \nabla^\cN, \Phi^\cN \)$ is an $\SU (N)$-monopole, that is, it satisfies \cref{eq:mono1,eq:mono_L2}. Furthermore, \cref{eq:mono2,eq:mono3,eq:mono4,eq:mono5} hold and, in some gauge, $\mu$ and $\kappa$ satisfy
   \begin{subequations}
   \begin{align}
      \mu      &= \diag \( \mu_1, \mu_2, \ldots, \mu_n \), \label{eq:mu} \\
      \kappa   &= \diag \( \kappa_1, \kappa_2, \ldots, \kappa_n \), \label{eq:kappa}
   \end{align}
   \end{subequations}
   and for all $a \in \{ 1, 2, \ldots, n \}$, $\mu_a, \kappa_a \in \u \( r_a \)$ satisfy
   \begin{align}
      \mu_a      &= i \lambda_a \id_{\cx^{r_a}}, \label{eq:mu_EVA} \\
      \kappa_a   &= \diag \( i k_{a, 1}^+, i k_{a, 2}^+, \ldots, 0, \ldots, - i k_{a, 2}^-, - i k_{a, 1}^- \). \label{eq:kappa_EVA}
   \end{align}
   Finally, the Yang--Mills--Higgs energy \eqref{eq:YMH_energy} of $\( \nabla^\cN, \Phi^\cN \)$ is given by
   \begin{equation}
      \cE_\YMH \( \nabla^\cN, \Phi^\cN \) = \sum\limits_{a = 1}^n \lambda_a k_a. \label{eq:energy}
   \end{equation}
\end{theorem}

\smallskip

\begin{remark}
   In \cite{charbonneau_nahm_2023}, we also show that the above monopole is without flat factors.
\end{remark}

\smallskip

\begin{proof}[Proof of \Cref{theorem:nahm}]
   First we prove that the pair $\( \nabla^\cN, \Phi^\cN \)$ satisfies \cref{eq:mono1}. We use the notations of \Cref{definition:nahm_construction}. Furthermore, for any $\Psi = \( \uppsi_a \)_{a = 1}^{n - 1} \oplus \( \upphi_b \)_{b = 2}^{n - 1} \in L^2 \( \bT \)$, let us define
   \begin{equation}
      \Sigma_\alpha \( \Psi \) \eqdef \( \( \id_{\cx^{m_a}} \otimes \sigma_\alpha \) \uppsi_a \)_{a = 1}^{n - 1} \oplus \( 0 \)_{b = 2}^{n - 1}.
   \end{equation}
   or, equivalently 
   \begin{equation}
      \Sigma_\alpha \eqdef - \del_\alpha \D_x^\cN. \label{eq:del_alpha_D}
   \end{equation}
   Note that $\Sigma_\alpha$ is unitary, skew-adjoint, and since $H_x^\cN$ commutes with action of quaternions, for all $\alpha \in \{ 1, 2, 3 \}$ we have that
   \begin{equation}
      G_x^\cN \circ \Sigma_\alpha = \Sigma_\alpha \circ G_x^\cN, \label{eq:G_commutes_w_quaternions}
   \end{equation}

   Next we show that
   \begin{equation}
      \Pi_{E_x^\cN} = \id_{L^2 \( \bT \)} - \D_x^\cN \circ G_x^\cN \circ \( \D_x^\cN \)^*.
   \end{equation}
   By construction, $\Pi_{E_x^\cN}$ is an orthogonal projection, that is $\Pi_{E_x^\cN}^2 = \Pi_{E_x^\cN}$ and $\( \Pi_{E_x^\cN} \)^* = \Pi_{E_x^\cN}$. If $\Psi \in E_x^\cN = \ker \( \D_x^\cN \)^*$, then
   \begin{equation}
      \Pi_{E_x^\cN} \( \Psi \) = \Psi - \( \D_x^\cN \circ G_x^\cN \circ \( \D_x^\cN \)^* \) \( \Psi \) = \Psi,
   \end{equation}
   and if $\Psi$ is $L^2$-orthogonal to $E_x^\cN$, then, via a standard usage of the Lax--Milgram theorem, one can show that there exists a (unique) $\Upsilon \in L_1^2 \( \bT \)$, such that $\Psi = \D_x^\cN \( \Upsilon \)$, and thus
   \begin{align}
      \Pi_{E_x^\cN} \( \Psi \)   &= \Psi - \( \D_x^\cN \circ G_x^\cN \circ \( \D_x^\cN \)^* \) \( \Psi \) \\
                                 &= \D_x^\cN \( \Upsilon \) - \( \D_x^\cN \circ G_x^\cN \circ \( \D_x^\cN \)^* \circ \D_x^\cN \) \( \Upsilon \) \\
                                 &= \D_x^\cN \( \Upsilon \) - \( \D_x^\cN \circ G_x^\cN \circ H_x^\cN \) \( \Upsilon \) \\
                                 &= \D_x^\cN \( \Upsilon \) - \D_x^\cN \( \Upsilon \) \\
                                 &= 0.
   \end{align}
   Hence, $\Pi_{E_x^\cN}$ is indeed the $L^2$-orthogonal projection onto $\Pi_{E_x^\cN}$.

   Note that the connection $\nabla^\cN$ is a Berry connection, in the sense of \cite{avron_chern_1989}. Thus by \cite{avron_chern_1989}*{Equation~(1.9)}, we get that
   \begin{equation}
      F_{\nabla^\cN} = \Pi_{E_x^\cN} \circ \( \rd \Pi_{E_x^\cN} \wedge \rd \Pi_{E_x^\cN} \) \circ \Pi_{E_x^\cN}.
   \end{equation}
   Equivalently, if we write
   \begin{equation}
      F_{\nabla^\cN} = \tfrac{1}{2} \sum\limits_{\alpha, \beta = 1}^3 F_{\alpha, \beta} \rd x^\alpha \wedge \rd x^\beta,
   \end{equation}
   then for all $\alpha, \beta \in \{ 1, 2, 3 \}$, we have
   \begin{equation}
      F_{\alpha, \beta} = \Pi_{E_x^\cN} \circ \( \del_\alpha \Pi_{E_x^\cN} \) \circ \( \del_\beta \Pi_{E_x^\cN} \) \circ \Pi_{E_x^\cN} - \Pi_{E_x^\cN} \circ \( \del_\beta \Pi_{E_x^\cN} \) \circ \( \del_\alpha \Pi_{E_x^\cN} \) \circ \Pi_{E_x^\cN}.
   \end{equation}
   Using \cref{eq:del_alpha_D}, we first compute $\del_\alpha \Pi_{E_x^\cN}$
   \begin{align}
      \del_\alpha \Pi_{E_x^\cN}  &= - \del_\alpha \( \D_x^\cN \circ G_x^\cN \circ \( \D_x^\cN \)^* \) \\
                                 &= - \( \del_\alpha \D_x^\cN \) \circ G_x^\cN \circ \( \D_x^\cN \)^* - \D_x^\cN \circ \( \del_\alpha G_x^\cN \) \circ \( \D_x^\cN \)^* - \D_x^\cN \circ G_x^\cN \circ \( \del_\alpha \D_x^\cN \)^*.
   \end{align}
   Using
   \begin{equation}
      \Pi_{E_x^\cN} \circ \D_x^\cN = 0 = \( \D_x^\cN \)^* \circ\Pi_{E_x^\cN}, \label{eq:piD}
   \end{equation}
   and \cref{eq:del_alpha_D}, we get
   \begin{subequations}
   \begin{align}
      \Pi_{E_x^\cN} \circ \( \del_\alpha \Pi_{E_x^\cN} \)  &= \Pi_{E_x^\cN} \circ \Sigma_\alpha \circ G_x^\cN \circ \( \D_x^\cN \)^*, \label{eq:Pi_d_alpha_Pi} \\
      \( \del_\beta \Pi_{E_x^\cN} \) \circ \Pi_{E_x^\cN}   &= - \D_x^\cN \circ G_x^\cN \circ \Sigma_\beta \circ \Pi_{E_x^\cN}. \label{eq:d_beta_Pi_Pi}
   \end{align}
   \end{subequations}
   Combining \cref{eq:G_commutes_w_quaternions,eq:piD,eq:Pi_d_alpha_Pi,eq:d_beta_Pi_Pi} yields
   \begin{align}
      \Pi_{E_x^\cN} \circ \( \del_\alpha \Pi_{E_x^\cN} \) \circ \( \del_\beta \Pi_{E_x^\cN} \) \circ \Pi_{E_x^\cN} &= - \Pi_{E_x^\cN} \circ \Sigma_\alpha \circ G_x^\cN \circ \( \D_x^\cN \)^* \circ \D_x^\cN \circ G_x^\cN \circ \Sigma_\beta \circ \Pi_{E_x^\cN} \\
         &= - \Pi_{E_x^\cN} \circ \Sigma_\alpha \circ G_x^\cN \circ H_x^\cN \circ G_x^\cN \circ \Sigma_\beta \circ \Pi_{E_x^\cN} \\
         &= - \Pi_{E_x^\cN} \circ \Sigma_\alpha \circ G_x^\cN \circ \Sigma_\beta \circ \Pi_{E_x^\cN} \\
         &= - \Pi_{E_x^\cN} \circ \Sigma_\alpha \circ \Sigma_\beta \circ G_x^\cN \circ \Pi_{E_x^\cN} \\
         &= \delta_{\alpha, \beta} \Pi_{E_x^\cN} \circ G_x^\cN \circ \Pi_{E_x^\cN} - \sum\limits_{\gamma = 1}^3 \epsilon_{\alpha, \beta, \gamma} \Pi_{E_x^\cN} \circ \Sigma_\gamma \circ G_x^\cN \circ \Pi_{E_x^\cN}.
   \end{align}
   In the last step we use the commutation relations of quaternions. Hence, we get
   \begin{equation}
      F_{\alpha, \beta} = - 2 \sum\limits_{\gamma = 1}^3 \epsilon_{\alpha, \beta, \gamma} \Pi_{E_x^\cN} \circ \Sigma_\gamma \circ G_x^\cN \circ \Pi_{E_x^\cN}. \label{eq:F_formula}
   \end{equation}

   Similarly, we can compute the components of $\nabla^\cN \Phi^\cN$. Let $M \colon L^2 \( \bT \) \rightarrow L^2 \( \bT \)$ be as in \cref{eq:M_def}. Thus, $\Phi^\cN = \Pi_{E_x^\cN} \circ M$. Note that $M$ is independent of $x$. Fix $\alpha \in \{ 1, 2, 3 \}$. Then, using \cref{eq:piD,eq:Pi_d_alpha_Pi,eq:d_beta_Pi_Pi} again, we get
   \begin{align}
      \nabla_\alpha^\cN \Phi^\cN &= \( \Pi_{E_x^\cN} \circ \del_\alpha \) \circ \( \Pi_{E_x^\cN} \circ M \) \circ \Pi_{E_x^\cN} - \Phi^\cN \circ \del_\alpha \\
         &= \Pi_{E_x^\cN} \circ \( \del_\alpha \Pi_{E_x^\cN} \) \circ M \circ \Pi_{E_x^\cN} + \Pi_{E_x^\cN} \circ M \circ \( \del_\alpha \Pi_{E_x^\cN} \) \circ \Pi_{E_x^\cN} \\
         &= \Pi_{E_x^\cN} \circ \Sigma_\alpha \circ G_x^\cN \circ \( \D_x^\cN \)^* \circ M \circ \Pi_{E_x^\cN} - \Pi_{E_x^\cN} \circ M \circ \D_x^\cN \circ G_x^\cN \circ \Sigma_\alpha \circ \Pi_{E_x^\cN} \\
         &= \Pi_{E_x^\cN} \circ \Sigma_\alpha \circ G_x^\cN \circ \left[ \( \D_x^\cN \)^*, M \right] \circ \Pi_{E_x^\cN} + \Pi_{E_x^\cN} \circ \left[ \D_x^\cN, M \right] \circ \Sigma_\alpha \circ G_x^\cN \circ \Pi_{E_x^\cN}.
   \end{align}
   Using the definitions of $\D_x^\cN$ and $M$, we get
   \begin{equation}
      \left[ \( \D_x^\cN \)^*,M\right] \circ \Pi_{E_x^\cN} = \Pi_{E_x^\cN} \circ \left[ \D_x^\cN,M\right] = - \Pi_{E_x^\cN},
   \end{equation}
   and thus
   \begin{equation}
      \nabla_\alpha^\cN \Phi^\cN = - 2 \: \Pi_{E_x^\cN} \circ \Sigma_\alpha \circ G_x^\cN \circ \Pi_{E_x^\cN}. \label{eq:dPhi_formula}
   \end{equation}
   Now \cref{eq:F_formula,eq:dPhi_formula} imply that the pair $\( \nabla^\cN, \Phi^\cN \)$ satisfies the monopole \cref{eq:mono1}. Clearly, $\( \nabla^\cN, \Phi^\cN \)$ is compatible with the Hermitian structure of $E^\cN$, thus it is an $\rU (N)$-monopole.

   Next we show that $\( \nabla^\cN, \Phi^\cN \)$ has the correct asymptotics, that is, in some gauge it satisfies \cref{eq:mono2,eq:mono3,eq:mono4,eq:mono5} with \cref{eq:mu,eq:kappa,eq:mu_EVA,eq:kappa_EVA}. These in turn imply finite energy \cref{eq:mono_L2} with \cref{eq:energy}. Since $\mu$ is traceless by \cref{eq:mu,eq:mu_EVA} and condition (2) in \Cref{definition:type}, we have by \Cref{remark:U(N)} that $\( \nabla^\cN, \Phi^\cN \)$ is, in fact, an $\SU (N)$-monopole. We prove these equations and conditions by constructing a frame of $E^\cN|_{\rl^3 - \left\{ \textnormal{o} \right\}}$ in which they are apparent. We remark here that similar (approximate) solutions were used in \cite{Cherkis-Larrain-Stern-2}.

   Let
   \begin{equation}
      \delta \eqdef \tfrac{1}{4} \min \( \left\{ \lambda_2 - \lambda_1, \lambda_3 - \lambda_2, \ldots, \lambda_n - \lambda_{n - 1} \right\} \) \in \rl_+, \label{eq:EVA_gap}
   \end{equation}
   and let $\chi : \rl_+ \rightarrow [0, 1]$ be a smooth function such that $\chi$ is identically $1$ on $\( 0, \delta \)$, identically zero on $\( 2 \delta, \infty\)$, and $|\dot{\chi}| = - \dot{\chi} \leqslant 2 \delta^{- 1}$ everywhere.

   Fix $x \in \rl^3 - \left\{ \textnormal{o} \right\}$. For each $a \in \{ 1, 2, \ldots, n - 1 \}$ and $b \in \{ 1, 2, \ldots, r_a^- \}$, let $u_{a, b}^- (x) \in X_a^-$ be the vector $u(k^{-}_{a,b},x)$ of \Cref{lemma:simultaneous_EVEs} in the summand $R_{k^-_{a,b}}$ of $X_a^-$ imposed by the decomposition  \cref{eq:reduction}, and $\widetilde{\Psi}_{a, b}^- (x) = \( \uppsi_{a^\prime} \)_{a^\prime = 1}^{n - 1} \oplus \( 0 \)_{b^\prime = 2}^{n - 1} \in L^2 \( \bT \)$ be so that $\uppsi_{a^\prime} = 0$, if $a^\prime \neq a$, for all $t \in \( \lambda_a , \lambda_{a + 1} \)$, we have, for some normalization constant $c_r=1+O(e^{-\delta r})$, 
   \begin{equation}
      \uppsi_a (t) = c_r\chi \( t - \lambda_a \) \tfrac{\( 2 r \)^{\frac{k_{a, b}^-}{2}}}{\sqrt{\( k_{a, b}^- - 1 \)!}} \( t - \lambda_a \)^{\frac{k_{a, b}^- - 1}{2}} e^{- r \( t - \lambda_a \)} u_{a, b}^- (x).
   \end{equation}

   Similarly, for each $a \in \{ 1, 2, \ldots, n - 1 \}$ and $b \in \{ 1, 2, \ldots, r_{a + 1}^+ \}$, let $u_{a + 1, b}^+ (x) \in X_a^+$ be the vector $u(-k^+_{a+1,b},x)$ of \Cref{remark:simultaneous_EVEsnegativek} in the summand $R_{k^+_{a+1,b}}$ of $X_a^+$, and $\widetilde{\Psi}_{a, b}^+ = \( \uppsi_{a^\prime} \)_{a^\prime = 1}^{n - 1} \oplus \( 0 \)_{b^\prime = 2}^{n - 1} \in L^2 \( \bT \)$ be so that $\uppsi_{a^\prime} = 0$, if $a^\prime \neq a$ and for all $t \in \( \lambda_a, \lambda_{a + 1} \)$, we have, for some normalization constant $c_r=1+O(e^{-\delta r})$, 
   \begin{equation}
      \uppsi_a (t) = c_r\chi \( \lambda_{a + 1} - t \) \tfrac{\( 2 r \)^{\frac{k_{a + 1, b}^+}{2}}}{\sqrt{\( k_{a + 1, b}^+ - 1 \)!}} \( \lambda_{a + 1} - t \)^{\frac{k_{a + 1, b}^+ - 1}{2}} e^{- r \( \lambda_{a + 1} - t \)} u_{a + 1, b}^+ (x).
   \end{equation}

   Finally, let $P_x^\pm$ be the projection onto the $\( \pm i r \)$-eigenspaces of $\widehat{x}$. For each $a \in \{ 2, 3, \ldots, n - 1 \}$ and $b \in \{ 1, 2, \ldots, r_a^0 \}$, let $\upxi_{a, b} \in Y_{a - 1}^+$ as in \cref{eq:C_a_def}, and $\widetilde{\Psi}_{a, b}^0 = \( \uppsi_{a^\prime} \)_{a^\prime = 1}^{n - 1} \oplus \( \upphi_{b^\prime} \)_{b^\prime = 2}^{n - 1} \in L^2 \( \bT \)$ be so that $\uppsi_{a^\prime} = 0$, if $a^\prime \notin \{ a - 1, a \}$, and, for some normalization constant $c_r=1+O(r^{-1})$, 
   \begin{equation}
      \begin{array}{rll}
         \uppsi_{a - 1} (t)   =& c_r\chi \( \lambda_a - t \) e^{- r \( \lambda_a - t \)} \( \id_{V_{a-1}^+}  \otimes P_x^- \) \( \upxi_{a, b} \),         & \mbox{if } t \in \( \lambda_{a - 1}, \lambda_a \), \\
         \uppsi_a (t)         =& - c_r\chi \( t - \lambda_a \) e^{- r \( t - \lambda_a \)} \( C_a \otimes P_x^+ \) \( \upxi_{a, b} \), & \mbox{if } t \in \( \lambda_a, \lambda_{a + 1} \),
      \end{array}
   \end{equation}
   and $\upphi_{b^\prime} = 0$, unless $b^\prime = a$, in which case it is given by \cref{eq:BVP_a_+,eq:BVP_a_-}, that is
   \begin{equation}
      \upphi_a = - i c_r\tfrac{\upxi_{a, b}}{\left| \upxi_{a, b} \right|}. \label{eq:upphi_a_def}
   \end{equation}

   Now for all relevant $\( a, b, \s \), \( a^\prime, b^\prime, \s^\prime \)$, we now have that
   \begin{equation}
      \left\langle \widetilde{\Psi}_{a, b}^\s (x) \middle| \widetilde{\Psi}_{a^\prime, b^\prime}^{\s^\prime} (x) \right\rangle_{L^2 \( \bT \)} = \delta_{\( a, b, \s \), \( a^\prime, b^\prime, \s^\prime \)} . \label{eq:tilde_Psi_brakets}
   \end{equation}
   Let  
   \begin{equation}
   	\Psi_{a, b}^\s (x) \eqdef \Pi_{E_x^\cN} \( \widetilde{\Psi}_{a, b}^\s (x) \) = \widetilde{\Psi}_{a, b}^\s (x) - \D_x^\cN \( \varphi_{a, b}^\s (x) \).
   \end{equation}
   By design, the action of $\( \D_x^\cN \)^*$ on $\widetilde{\Psi}_{a, b}^\s$ differs from the model \cref{eq:model_eq} only by a bounded, $x$-independent endomorphism of $\( \lambda_a, \lambda_{a + 1} \) \times \( \cx^{m_a} \otimes \bH \)$. Thus, the family
   \begin{equation}
      \left\{ \( \D_x^\cN \)^* \( \widetilde{\Psi}_{a, b}^\s (x) \) \middle| x \in \rl^3 \right\} \subset L^2 \( \bT \)
   \end{equation}
   is bounded, $x$-independently. Let
   \begin{equation}
      \varphi_{a, b}^\s (x) \eqdef G_x^\cN \( \( \D_x^\cN \)^* \( \widetilde{\Psi}_{a, b}^\s (x) \) \).
   \end{equation}
   Then $\varphi_{a, b}^\s (x) \in L_1^2 \( \bT \)$ and
   \begin{equation}
   	\Psi_{a, b}^\s (x) = \widetilde{\Psi}_{a, b}^\s (x) - \D_x^\cN \( \varphi_{a, b}^\s (x) \).
   \end{equation} 
   By \Cref{lemma:Greens_bound}, we have $\| \varphi_{a, b}^\s (x) \|_{L^2 \( \bT \)} = O \( r^{- 2} \)$, and
   \begin{equation}
      \| \D_x^\cN \( \varphi_{a, b}^\s (x) \) \|_{L^2 \( \bT \)}^2 = \left\langle \( \D_x^\cN \)^* \( \widetilde{\Psi}_{a, b}^\s (x) \) \middle| \varphi_{a, b}^\s (x) \right\rangle_{L^2 \( \bT \)} = O \( r^{- 2} \). \label{eq:D_phi_bound}
   \end{equation}
   Thus, by \cref{eq:tilde_Psi_brakets,eq:D_phi_bound}, we get   
   \begin{equation}
      \left\langle \Psi_{a, b}^\s (x) \middle| \Psi_{a^\prime, b^\prime}^{\s^\prime} (x) \right\rangle_{L^2 \( \bT \)} = \delta_{\( a, b, \s \), \( a^\prime, b^\prime, \s^\prime \)} + O \( r^{- 2} \). \label{eq:Psi_brakets}
   \end{equation}

   For all relevant $a, b$, let $u_{a, b}^0 \eqdef \tfrac{\upxi_{a, b}}{\left| \upxi_{a, b} \right|}$ and note that it is independent of $x$. While $u_{a, b}^\s (x)$, and thus $\Psi_{a, b}^\s (x)$, are both ambiguous up to a $\rU (1)$ factor, the complex lines $\( \cL_{a, b}^\s \)_x \eqdef \spn_\cx \( u_{a, b}^\s \)$ and $\( L_{a, b}^\s \)_x \eqdef \spn_\cx \( \left\{ \Psi_{a, b}^\s (x) \right\} \)$ are well-defined and $\cL_{a, b}^\s$ and $L_{a, b}^\s$ are a smooth, complex line bundle over $\rl^3 - \left\{ \textnormal{o} \right\}$, that are canonically isomorphic via $u_{a, b}^\s (x) \mapsto \Psi_{a, b}^\s (x)$. Note that this is not an isomorphism of Hermitian structures, albeit in some sense close to one by \cref{eq:Psi_brakets}. By \Cref{lemma:model_solution_k_not_zero,remark:Dirac_mono}, the Chern number of $\cL_{a, b}^\s$, and hence of $L_{a, b}^\s$, is $\s k_{a, b}^\s$. Since $u_{a, b}^\s (x)$ is independent of $r$, $\cL_{a, b}^\s$ can also be viewed as a bundle over $S_\infty^2$. Let its Chern connection be $\nabla^{\cL_{a, b}^\s}$. Now we can finally define
   \begin{equation}
      \( E^\infty, \nabla^\infty \) \eqdef \bigoplus_{a = 1}^{n - 1} \bigoplus_{\s \in \{ - 1, 0, 1 \}} \bigoplus_{b = 1}^{r_a^\s} \( \cL_{a, b}^\s, \nabla^{\cL_{a, b}^\s} \).
   \end{equation}
   Note that we also have
   \begin{equation}
      E^\cN|_{\rl^3 - \left\{ \textnormal{o} \right\}} \cong \bigoplus_{a = 1}^{n - 1} \bigoplus_{\s \in \{ - 1, 0, 1 \}} \bigoplus_{b = 1}^{r_a^\s} L_{a, b}^\s.
   \end{equation}
   Thus, we constructed an asymptotically orthogonal decomposition of $E^\cN |_{\rl^3 - \left\{ \textnormal{o} \right\}}$ into Hermitian line bundles together with an asymptotically unitary isomorphism of $E^\cN|_{\rl^3 - \left\{ \textnormal{o} \right\}}$ and $\pi^* \( E^\infty \)$. Through this isomorphism  we claim that \cref{eq:mono2,eq:mono3,eq:mono4,eq:mono5,eq:mu,eq:kappa,eq:mu_EVA,eq:kappa_EVA} hold.

   Let us compute 
   \begin{align}
      \left\langle \Psi_{a, b}^\s (x) \middle| \Phi^\cN \( \Psi_{a^\prime, b^\prime}^{\s^\prime} (x) \) \right\rangle_{L^2 \( \bT \)} &= \left\langle \widetilde{\Psi}_{a, b}^\s (x) \middle| M \( \widetilde{\Psi}_{a^\prime, b^\prime}^{\s^\prime} (x) \) \right\rangle_{L^2 \( \bT \)} \\
         & \quad - \left\langle \widetilde{\Psi}_{a, b}^\s (x) \middle| M \( \D_x^\cN \( \varphi_{a^\prime, b^\prime}^{\s^\prime} (x) \) \) \right\rangle_{L^2 \( \bT \)} \\
         & \quad + \left\langle M \( \D_x^\cN \( \varphi_{a, b}^\s (x) \) \) \middle| \widetilde{\Psi}_{a^\prime, b^\prime}^{\s^\prime} (x) \right\rangle_{L^2 \( \bT \)} \\
         & \quad + \left\langle \D_x^\cN \( \varphi_{a, b}^\s (x) \) \middle| M \( \D_x^\cN \( \varphi_{a^\prime, b^\prime}^{\s^\prime} (x) \) \) \right\rangle_{L^2 \( \bT \)} \\
         &= \left\langle \widetilde{\Psi}_{a, b}^\s (x) \middle| M \( \widetilde{\Psi}_{a^\prime, b^\prime}^{\s^\prime} (x) \) \right\rangle_{L^2 \( \bT \)} \\
         & \quad - \left\langle \tilde\Psi_{a, b}^\s (x) \middle| \left[ M, \D_x^\cN \right] \( \varphi_{a^\prime, b^\prime}^{\s^\prime} (x) \) \right\rangle_{L^2 \( \bT \)} \\
         & \quad - \left\langle \( \D_x^\cN \)^* \( \widetilde{\Psi}_{a, b}^\s (x) \) \middle| M\varphi_{a^\prime, b^\prime}^{\s^\prime} (x) \right\rangle_{L^2 \( \bT \)} \\
         & \quad + \left\langle \left[ M, \D_x^\cN \right] \( \varphi_{a, b}^\s (x) \) \middle| \widetilde{\Psi}_{a^\prime, b^\prime}^{\s^\prime} (x) \right\rangle_{L^2 \( \bT \)} \\
         & \quad +\left\langle M\varphi_{a, b}^\s (x) \middle| \( \D_x^\cN \)^* \( \widetilde{\Psi}_{a^\prime, b^\prime}^{\s^\prime} (x) \) \right\rangle_{L^2 \( \bT \)} \\
         & \quad + O \( r^{- 2} \) \\
         &= \left\langle \widetilde{\Psi}_{a, b}^\s (x) \middle| M \( \widetilde{\Psi}_{a^\prime, b^\prime}^{\s^\prime} (x) \) \right\rangle_{L^2 \( \bT \)} + O \( r^{- 2} \).
   \end{align}
   Note that $\left\langle \widetilde{\Psi}_{a, b}^\s (x) \middle| M \( \widetilde{\Psi}_{a^\prime, b^\prime}^{\s^\prime} (x) \) \right\rangle_{L^2 \( \bT \)} = 0$, unless $\( a, b, \s \) = \( a^\prime, b^\prime, \s^\prime \)$. When $\( a, b, \s \) = \( a^\prime, b^\prime, \s^\prime \)$, $\s = \s^\prime = -$ and $k_{a, b}^- \in \Z_+$, we get 
   \begin{align}
      \left\langle \Psi_{a, b}^- (x) \middle| \Phi^\cN \( \Psi_{a, b}^- (x) \) \right\rangle_{L^2 \( \bT \)} &= \left\langle \widetilde{\Psi}_{a, b}^- (x) \middle| M \( \widetilde{\Psi}_{a, b}^- (x) \) \right\rangle_{L^2 \( \bT \)} + O \( r^{- 2} \) \\
         &= i c_r^2\int\limits_{\lambda_a}^{\lambda_{a + 1}} t \chi \( t - \lambda_a \)^2 \tfrac{\( 2 r \)^{k_{a, b}^-}}{\( k_{a, b}^- - 1 \)!} \( t - \lambda_a \)^{k_{a, b}^- - 1} e^{- 2 r \( t - \lambda_a \)} \rd t + O \( r^{- 2} \) \\
         &= i c_r^2\tfrac{\( 2 r \)^{k_{a, b}^-}}{\( k_{a, b}^- - 1 \)!} \int\limits_0^{\lambda_{a + 1} - \lambda_a} \( s + \lambda_a \) \chi \( s \)^2 s^{k_{a, b}^- - 1} e^{- 2 r s} \rd s + O \( r^{- 2} \) \\
         &= i \( \lambda_a - \tfrac{- k_{a, b}^-}{2 r} \) + O \( r^{- 2} \).
   \end{align}
   Analogous computations yield the asymptotics for $\s = +$. When $\s = 0$, we get
   \begin{align}
      \left\langle \Psi_{a, b}^0 (x) \middle| \Phi^\cN \( \Psi_{a, b}^0 (x) - i \lambda_a \Psi_{a, b}^0 (x) \) \right\rangle_{L^2 \( \bT \)} &= \left\langle \widetilde{\Psi}_{a, b}^0 (x) \middle| \( M - i \lambda_a \) \( \widetilde{\Psi}_{a, b}^0 (x) \) \right\rangle_{L^2 \( \bT \)} \\
         & \quad + O \( r^{- 2} \) \\
         &= i \int\limits_{\lambda_{a - 1}}^{\lambda_a} t \chi \( \lambda_a - t \)^2 e^{- 2 r \( \lambda_a - t \)} \left| \(  \id_{V_{a-1}^+}\otimes P_x^- \) \( \upxi_{a, b} \) \right|^2 \rd t \\
         & \quad + i \int\limits_{\lambda_a}^{\lambda_{a + 1}} t \chi \( t - \lambda_a \)^2 e^{- 2 r \( t - \lambda_a \)} \left| \( C_a\otimes P_x^+  \) \( \upxi_{a, b} \) \right|^2 \rd t \\
         & \quad + O \( r^{- 2} \) \\
         &= i \tfrac{\left| \( C_a^* \otimes P_x^- \) \( \upxi_{a, b} \) \right|^2  - \left| \( \id_{V_a^-} \otimes P_x^+ \) \( \upxi_{a, b} \) \right|^2}{4 r^2} \int\limits_0^\infty s \chi \( s \)^2 e^{- s} \rd s \\
         & \quad + O \( r^{- 2} \) \\
         &= O \( r^{- 2} \).
   \end{align}

   In summary, we get, for all relevant $\( a, b, \s \)$, that
   \begin{equation}
      \Phi^\cN \( \Psi_{a, b}^\s (x) \) = i \( \lambda_a - \tfrac{\s k_{a, b}^\s}{2 r} \) \Psi_{a, b}^\s (x) + O \( r^{- 2} \). \label{eq:Phi_Psi_EVE}
   \end{equation}
   
   Thus we proved \cref{eq:mono3} and the \cref{eq:mu,eq:kappa,eq:mu_EVA,eq:kappa_EVA} for the Higgs field (but not yet for its derivative, that is, not for \cref{eq:mono4}). Furthermore, given \cref{eq:energymuk}, we now proved \cref{eq:energy}. 

   Next we prove \cref{eq:mono2}. When $\s = 0$, then, using that $\upxi_{a, b}$ is $x$-independent, we have that
   \begin{equation}
      \| \del_r \widetilde{\Psi}_{a, b}^0 (x) \|_{L^2 \( \bT \)} = O \( r^{- 3} \),
   \end{equation}
   which implies $\| \nabla_{\del_r}^\cN \Psi_{a, b}^0 (x) \|_{L^2 \( \bT \)} = O \( r^{- 3} \)$. Thus, the radial part of \cref{eq:mono2} is verified for $\s = 0$.
   
   When $\s = \pm$, we choose the vector $u_{a, b}^\s (x)$ to be $r$-independent. In this case we have
   \begin{equation}
      \del_r \widetilde{\Psi}_{a, b}^\s (x) = - \s \( \lambda_a - \tfrac{\s k_{a, b}^\s}{2 r} - t \) \widetilde{\Psi}_{a, b}^\s (x).
   \end{equation}
   Now using \cref{eq:Pi_d_alpha_Pi,eq:Phi_Psi_EVE}, we get
   \begin{align}
      \nabla_{\del_r}^\cN \Psi_{a, b}^\s (x) &= \Pi_{E_x^\cN} \( \del_r \( \Pi_{E_x^\cN} \widetilde{\Psi}_{a, b}^\s (x) \) \) \\
         &= \Pi_{E_x^\cN} \( \del_r \widetilde{\Psi}_{a, b}^\s (x) \) + \( \Pi_{E_x^\cN} \circ \( \del_r \Pi_{E_x^\cN} \) \) \( \widetilde{\Psi}_{a, b}^\s (x) \) \\
         &= \Pi_{E_x^\cN} \( - \s \( \lambda_a - \tfrac{\s k_{a, b}^-}{2 r} - t \) \widetilde{\Psi}_{a, b}^\s (x) \) + \sum\limits_{\alpha = 1}^3 \tfrac{x_\alpha}{r} \( \Pi_{E_x^\cN} \circ \( \del_\alpha \Pi_{E_x^\cN} \) \) \( \widetilde{\Psi}_{a, b}^\s (x) \)+O(e^{-r\delta})  \\
         &= \Pi_{E_x^\cN} \( - \s \( \lambda_a - \tfrac{\s k_{a, b}^-}{2 r} - t \) \widetilde{\Psi}_{a, b}^\s (x) \) + \sum\limits_{\alpha = 1}^3 \tfrac{x_\alpha}{r} \( \Pi_{E_x^\cN} \circ \Sigma_\alpha \circ G_x^\cN \circ \( \D_x^\cN \)^* \) \( \widetilde{\Psi}_{a, b}^\s (x) \) +O(e^{-r\delta})  \\
         &= O \( r^{- 2} \).
   \end{align}
   Thus, the radial part of \cref{eq:mono2} is verified for $\s = \pm$.

   Next we prove \cref{eq:mono2} for directions perpendicular to the radial one. If $v \in T_x \S_r^2$, then $\nabla_v^\infty \Psi_{a, b}^\s (x) = \Pi_{E_x^\cN} \del_v \widetilde{\Psi}_{a, b}^\s (x)$.  Thus, using \cref{eq:Pi_d_alpha_Pi}, we get
   \begin{align}
      \( \nabla_v^\cN - \nabla_v^\infty \) \Psi_{a, b}^\s (x)  &= \Pi_{E_x^\cN} \circ \( \del_v \Pi_{E_x^\cN} \) \widetilde{\Psi}_{a, b}^\s (x) \\
      &= \sum\limits_{\alpha = 1}^3 v_\alpha \Pi_{E_x^\cN} \circ \Sigma_\alpha \circ G_x^\cN \circ \( \D_x^\cN \)^* \widetilde{\Psi}_{a, b}^\s (x)  \label{eq:R1_formula} \\
      &= O \( |v| r^{- 2} \),
   \end{align}
   and thus in this frame \cref{eq:mono2} holds.

   Next we verify \cref{eq:mono4}. For each $x \in \rl^3 - \left\{ \textnormal{o} \right\}$, let $\Sigma (x) \eqdef \sum\limits_{\alpha = 1}^3 \tfrac{x_\alpha}{r} \Sigma_\alpha$. By design
   \begin{equation}
      \Sigma (x) \( \widetilde{\Psi}_{a^\prime, b^\prime}^{\s^\prime} (x) \) = -\s^\prime i \widetilde{\Psi}_{a^\prime, b^\prime}^{\s^\prime} (x). \label{eq:sigma_psi_EVE}
   \end{equation}
   Using \cref{eq:Greens_bound,eq:dPhi_formula,eq:D_phi_bound,eq:sigma_psi_EVE}, we get
   \begin{align}
      \left\langle \Psi_{a, b}^\s (x) \middle| \( \nabla_{\del_r}^\cN \Phi^\cN \) \( \Psi_{a^\prime, b^\prime}^{\s^\prime} (x) \) \right\rangle_{L^2 \( \bT \)} &= -2 \s^\prime i \left\langle \widetilde{\Psi}_{a, b}^\s (x) \middle| G_x^\cN \( \widetilde{\Psi}_{a^\prime, b^\prime}^{\s^\prime} (x) \) \right\rangle_{L^2 \( \bT \)} + O \( r^{- 3} \).
   \end{align}

   Next we give sharp enough approximations for $G_x^\cN \( \widetilde{\Psi}_{a^\prime, b^\prime}^{\s^\prime} (x) \) \in L_1^2 \( \bT \)$. Let $\delta > 0$ as in \cref{eq:EVA_gap} and $\chi : \rl_+ \rightarrow [0, 1]$ as defined in the sentence after \cref{eq:EVA_gap}. When $\s^\prime = -$, let us define $\Upsilon_{a^\prime, b^\prime}^- (x) = \( \upsilon_c \)_{c = 1}^{n - 1} \in L_1^2 \( \bT \)$ so that for any $c \in \{ 1, 2, \ldots, n - 1 \} - \{ a^\prime \}$, $\upsilon_c = 0$, and for all $t \in \( \lambda_{a^\prime}, \lambda_{a^\prime + 1} \)$ we have
   \begin{equation}
      \upsilon_{a^\prime} (t) = \chi \( t - \lambda_{a^\prime} \) \tfrac{2}{k_{a^\prime, b^\prime}^- + 1} \tfrac{\( 2 r \)^{\frac{k_{a^\prime, b^\prime}^-}{2} - 1}}{\sqrt{\( k_{a^\prime, b^\prime}^- - 1 \)!}} \( t - \lambda_{a^\prime} \)^{\frac{k_{a^\prime, b^\prime}^- + 1}{2}} e^{- r \( t - \lambda_{a^\prime} \)} u_{a^\prime, b^\prime}^- (x).
   \end{equation}
   Now simple computation yields that
   \begin{align}
      \| \Upsilon_{a^\prime, b^\prime}^- (x) \|_{L^2 \( \bT \)}^2 &= \tfrac{k_{a^\prime, b^\prime}^-}{4 \( k_{a^\prime, b^\prime}^- + 1 \) r^4} + O \( r^{- 6} \), \\
      \left\langle \widetilde{\Psi}_{a, b}^\s (x) \middle| \Upsilon_{a^\prime, b^\prime}^- (x) \right\rangle_{L^2 \( \bT \)}  &= \delta_{\( a, b, \s \), \( a^\prime, b^\prime, - \)} \tfrac{k_{a^\prime, b^\prime}^-}{2 \( k_{a^\prime, b^\prime}^- + 1 \) r^2} + O \( r^{- 3} \).
   \end{align}
   Moreover, $\Upsilon_{a^\prime, b^\prime}^-$ was chosen so that
   \begin{equation}
      H_x^\cN \( G_x^\cN \( \widetilde{\Psi}_{a^\prime, b^\prime}^{\s^\prime} (x) \) - \Upsilon_{a^\prime, b^\prime}^- (x) \) = \cR \( \Upsilon_{a^\prime, b^\prime}^- (x) \),
   \end{equation}
   where $\cR$ is a bounded endomorphism over $\( \lambda_a, \lambda_{a + 1} \) \times \( \cx^{m_a} \otimes \bH \)$. Thus,
   \begin{equation}
      \| G_x^\cN \( \widetilde{\Psi}_{a^\prime, b^\prime}^{\s^\prime} (x) \) - \Upsilon_{a^\prime, b^\prime}^- (x) \|_{L^2 \( \bT \)} =  O \( r^{- 3} \).
   \end{equation}
   Now we have that
   \begin{align}
      \left\langle \Psi_{a, b}^\s (x) \middle| \( \nabla_{\del_r}^\cN \Phi^\cN \) \( \Psi_{a^\prime, b^\prime}^- (x) \) \right\rangle_{L^2 \( \bT \)} &= - 2 i \left\langle \widetilde{\Psi}_{a, b}^\s (x) \middle| G_x^\cN \( \widetilde{\Psi}_{a^\prime, b^\prime}^- (x) \) \right\rangle_{L^2 \( \bT \)} + O \( r^{- 3} \) \\
         &= - 2 i \left\langle \widetilde{\Psi}_{a, b}^\s (x) \middle| \Upsilon_{a^\prime, b^\prime}^- (x) \right\rangle_{L^2 \( \bT \)} + O \( r^{- 3} \) \\
         &= - 2 i \delta_{\( a, b, \s \), \( a^\prime, b^\prime, - \)} \tfrac{k_{a^\prime, b^\prime}^-}{4 r^2} + O \( r^{- 3} \) \\
         &= \delta_{\( a, b, \s \), \( a^\prime, b^\prime, - \)} i \tfrac{- k_{a^\prime, b^\prime}^-}{2 r^2} + O \( r^{- 3} \).
   \end{align}
   Similar computations yield the same asymptotics for $\s \in \{ 0, + \}$, which proves \cref{eq:mono4}.

   The last thing to verify is \cref{eq:mono5}. Let us recall that the remainder terms, defined in the frame above, are as
   \begin{align}
      \cR_1 (x)   &\eqdef r^2 \( \nabla^\cN - \del_r \otimes \rd r - \nabla^\infty \), \\
      \cR_2 (x)   &\eqdef r^2 \( \Phi^\cN - \mu + \tfrac{1}{2 r} \kappa \), \\
      \cR_3 (x)   &\eqdef r^3 \( \nabla^\cN \Phi^\cN - \tfrac{1}{2 r^2} \kappa \otimes \rd r \).
   \end{align}
   We have, in fact, already proved that they are all in $L^\infty \( \rl^3 \)$. Thus, the only thing left to show is that for all $i \in \{ 1, 2, 3 \}$ we have that $\nabla^\cN \cR_i \in L^\infty \( \rl^3 \)$. Since we have already verified \cref{eq:mono1,eq:mu_parallel,eq:kappa_parallel,eq:curvature_at_infinity}, it is now enough to show only that $\nabla^{\cN \otimes \mathrm{LC}} \cR_1 \in L^\infty \( \rl^3 \)$, where $\nabla^{\cN \otimes \mathrm{LC}}$ is the connection induced by $\nabla^\cN$ and the Levi-Civita connection of $\rl^3$. First, let us pick a coordinate system so that $\del_1 = \del_r$ at $x$. Then, for all relevant $\( a, b, \s \)$, and $\alpha, \beta$, we have
   \begin{align}
      \( \nabla_{\del_\alpha}^{\cN \otimes \mathrm{LC}} \cR_1 \) \( \del_\beta \) \( \Psi_{a, b}^\s  \)   &= \nabla_{\del_\alpha}^\cN \( \cR_1 \( \del_\beta \) \( \Psi_{a, b}^\s  \) \) \\
         & \quad - \cR_1 \( \nabla^{\mathrm{LC}}_{\del_\alpha} \del_\beta \) \( \Psi_{a, b}^\s  \) - \cR_1 \( \del_\beta \) \( \nabla_{\del_\alpha}^\cN \Psi_{a, b}^\s  \) \\
         &= \nabla_{\del_\alpha} \( \cR_1 \( \del_\beta \) \( \Psi_{a, b}^\s  \) \) + O \( 1 \).
   \end{align}
   Thus, it is enough to show that $\nabla_{\del_\alpha}^\cN \( \cR_1 \( \del_\beta \) \( \Psi_{a, b}^\s  \) \)$ is uniformly bounded.

   First, when $\beta = 1$, we have
   \begin{equation}
         \cR_1 \( \del_r \) \( \Psi_{a, b}^\s \) = r^2 \( \nabla_{\del_r}^\cN \Psi_{a, b}^\s - \del_r \Psi_{a, b}^\s \) = r^2 \( \Pi_{E_x^\cN} - \id_{E_x^\cN} \) \circ \( \del_r \Pi_{E_x^\cN} \) \widetilde{\Psi}_{a, b}^\s,
   \end{equation}
   and thus
   \begin{align}
      \nabla_{\del_\alpha}^\cN \( \cR_1 \( \del_r \) \Psi_{a, b}^\s \)   &= \nabla_{\del_\alpha}^\cN \( r^2 \( \Pi_{E_x^\cN} - \id_{E_x^\cN} \) \circ \( \del_r \Pi_{E_x^\cN} \) \widetilde{\Psi}_{a, b}^\s \) \\
         &= \Pi_{E_x^\cN} \del_\alpha \( r^2 \( \Pi_{E_x^\cN} - \id_{E_x^\cN} \) \circ \( \del_r \Pi_{E_x^\cN} \) \widetilde{\Psi}_{a, b}^\s \) \\
         &= r^2 \Pi_{E_x^\cN} \circ \( \del_\alpha \Pi_{E_x^\cN} \) \circ \( \del_r \Pi_{E_x^\cN} \) \widetilde{\Psi}_{a, b}^\s,
   \end{align}
   and now, using \cref{eq:Pi_d_alpha_Pi,eq:d_beta_Pi_Pi}, we get that $\nabla_{\del_\alpha}^\cN \( \cR_1 \( \del_r \) \Psi_{a, b}^\s \)$, and thus $\( \nabla_{\del_\alpha}^\cN \cR_1 \) \( \del_r \)$, are uniformly bounded.

   When $\beta  \in \{ 2, 3 \}$, using \cref{eq:R1_formula}, we have
   \begin{equation}
      \cR_1 \( \del_\beta \) \Psi_{a, b}^\s (x) = r^2 \Pi_{E_x^\cN} \circ \Sigma_\beta \circ G_x^\cN \circ \( \D_x^\cN \)^* \widetilde{\Psi}_{a, b}^\s (x),
   \end{equation}
   and, since $\del_\beta r = 0$, we get
   \begin{align}
      \nabla_{\del_\alpha}^\cN \( \cR_1 \( \del_\beta \) \( \Psi_{a, b}^\s (x) \) \) &= r^2 \Pi_{E_x^\cN} \circ \del_\alpha \( \Pi_{E_x^\cN} \circ \Sigma_\beta \circ G_x^\cN \circ \( \D_x^\cN \)^* \widetilde{\Psi}_{a, b}^\s (x) \) \\
         &= r^2 \Pi_{E_x^\cN} \circ \( \del_\alpha \Pi_{E_x^\cN} \) \circ \Sigma_\beta \circ G_x^\cN \circ \( \D_x^\cN \)^* \widetilde{\Psi}_{a, b}^\s (x) \\
         & \quad + r^2 \Pi_{E_x^\cN} \circ \Sigma_\beta \circ \( \del_\alpha G_x^\cN \) \circ \( \D_x^\cN \)^* \widetilde{\Psi}_{a, b}^\s (x) \\
         & \quad + r^2 \Pi_{E_x^\cN} \circ \Sigma_\beta \circ G_x^\cN \circ \( \del_\alpha \D_x^\cN \)^* \widetilde{\Psi}_{a, b}^\s (x) \\
         & \quad + r^2 \Pi_{E_x^\cN} \circ \Sigma_\beta \circ G_x^\cN \circ \( \D_x^\cN \)^* \( \del_\alpha \widetilde{\Psi}_{a, b}^\s (x) \),
   \end{align}
   and using the same bounds as before, we get that each summand is $O \( 1 \)$.

   The proof is thus complete.
\end{proof}

\bigskip

\appendix

\section{Monopoles with real orthogonal and compact symplectic structure groups}
\label{app:SO/Sp}

Following \cites{hurtubise_construction_1989,hurtubise_monopoles_1990}, one can introduce further restrictions on the Nahm data to produce monopoles with structure groups reduced to $\rG \subset \SU (N)$. In the maximal symmetry breaking case this was proven in \cite{hurtubise_construction_1989}*{Section~5} for real orthogonal and compact symplectic groups and further discussed in \cite{hurtubise_monopoles_1990} for arbitrary compact Lie groups. The methods can easily be adapted to the case of nonmaximal symmetry breaking.

Here we outline the cases when $\rG$ is either a real orthogonal and compact symplectic groups only. The reduction to real orthogonal groups (from $\SU(N)$ to the included $\SO(N)$) is equivalent to the existence of a $\( \nabla^\cN, \Phi^\cN \)$-compatible real structure, $\widehat{C}$, on $E_x^\cN$, that is, a $\nabla^\cN$-parallel, real linear bundle map that commutes with $\Phi^\cN$, anti-commutes with the multiplication by $i$, and is an involution, that is $\widehat{C}^2 = \id_{E_x^\cN}$. Similarly, the reduction to compact symplectic groups is equivalent to the existence of a $\( \nabla^\cN, \Phi^\cN \)$-compatible quaternionic structure, $\widehat{J}$, on $E_x^\cN$, that is, a $\nabla^\cN$-parallel, real linear bundle map that anti-commutes with $\Phi^\cN$, anti-commutes with the multiplication by $i = \sigma_1$, and is a complex structure, that is $\widehat{J}^2 = - \id_{E_x^\cN}$.

Let $\cN$ be a Nahm data in temporal gauge and that for all $a \in \{ 1, \ldots, n - 1 \}$ satisfies
\begin{align}
   \lambda_a   &= - \lambda_{n+1 - a}, \\
   r_a^\pm     &= r_{n+1 - a}^\mp, \\
   r_a^0       &= r_{n+1 - a}^0,
\end{align}
and for all $b \in \{ 1, \ldots, r_a^\pm \}$
\begin{equation}
   k_{a, b}^\pm = - k_{n+1 - a, b}^\mp,
\end{equation}
and thus
\begin{align}
   k_a^\pm  &= k_{n+1 - a}^\mp, \\
   k_a      &= - k_{n+1 - a}, \\
   m_a      &= m_{n - a}.
\end{align}
Let $\iota \colon \rl \rightarrow \rl; t \mapsto - t$. Then $\iota \( \lambda_a, \lambda_{a + 1} \) = \( \lambda_{n - a }, \lambda_{n+1 - a} \)$.

\begin{itemize}
   
   \item[] \textbf{Orthogonal groups:} Let us further assume that for all $t \in \( \lambda_a, \lambda_{a + 1} \)$ and $\alpha \in \{ 1, 2, 3 \}$:
   \begin{equation}
      T_\alpha^{(a)} (t) = \overline{T_\alpha^{(n-a)} (- t)}.
   \end{equation}
   Let $c_a \colon \cx^{m_a} \otimes \bH \rightarrow \cx^{m_{n - a}} \otimes \bH \cong \cx^{m_a} \otimes \bH$ given via
   \begin{equation}
      c_a \( v \otimes q \) = \overline{v} \otimes {q},
   \end{equation}
   and then, for all $\Psi = \( \uppsi_a \)_{a = 1}^{n - 1} \in L^2 \( \bT \)$, let us define
   \begin{equation}
      C \( \Psi \) = \( c_{n - a} \( \uppsi_{n - a} \) \circ \iota \)_{a = 1}^{n - 1}.
   \end{equation}
   Then $C$ is a real structure on $L^2 \( \bT \)$ and for all $x \in \rl^3$, $C$ preserves $E_x^\cN$. Let $\widehat{C}_x \eqdef C|_{E_x^\cN}$ and consider $\widehat{C}$ as a bundle map. Then we have
   \begin{equation}
      \nabla^\cN \widehat{C} = 0, \quad \& \quad \left[ \Phi^\cN, \widehat{C} \right] = 0.
   \end{equation}
   Thus $\( \nabla^\cN, \Phi^\cN \)$ is an $\SO (N)$-monopole.

   \item[] \textbf{Compact symplectic groups:} Let us further assume that for all $t \in \( \lambda_a, \lambda_{a + 1} \)$ and $\alpha \in \{ 1, 2, 3 \}$:
   \begin{equation}
      T_\alpha^{(a)} (t) = - \overline{T_\alpha^{(n-a)} (- t)}.
   \end{equation}
   Let $j_a \colon \cx^{m_a} \otimes \bH \rightarrow \cx^{m_{n - a}} \otimes \bH \cong \cx^{m_a} \otimes \bH$ given via
   \begin{equation}
      j_a \( v \otimes q \) = \overline{v} \otimes \( q \sigma_2 \),
   \end{equation}
   and then, for all $\Psi = \( \uppsi_a \)_{a = 1}^{n - 1} \in L^2 \( \bT \)$, let us define
   \begin{equation}
      J \( \Psi \) = \( j_{n - a} \( \uppsi_{n - a} \) \circ \iota \)_{a = 1}^{n - 1}
   \end{equation}
   Then $J$ is a quaternionic structure on $L^2 \( \bT \)$ and for all $x \in \rl^3$, $J$ preserves $E_x^\cN$. Let $\widehat{J}_x \eqdef J|_{E_x^\cN}$ and consider $\widehat{J}$ as a bundle map. Then we have
   \begin{equation}
      \nabla^\cN \widehat{J} = 0, \quad \& \quad \left[ \Phi^\cN, \widehat{J} \right] = 0.
   \end{equation}
   Thus $N$ is necessarily even and $\( \nabla^\cN, \Phi^\cN \)$ is an $\Sp \( \tfrac{N}{2} \)$-monopole.

\end{itemize}

\bigskip

\section{Low rank examples}
\label{app:low_rank}

As an illustration of our results, we describe the Nahm data that yield $\SU (3)$-monopoles with nonmaximal symmetry breaking.

Since $N = 3$, $n$ can only take the values $2$ and $3$. When $n = 3$, the monopole has maximal symmetry breaking, and our construction above is equivalent with that of Hurtubise and Murray in \cite{hurtubise_construction_1989}. Thus, we focus of on the novel, $n = 2$ case. Now we have two imaginary eigenvalue, $i \lambda_1$ and $i \lambda_2$ and two ranks, $r_1$ and $r_2$, which have to satisfy that $r_1, r_2 \in \Z_+$ and $r_1 + r_2 = 3$. Without any loss of generality, we assume that $r_1 = 1$ and $r_2 = 2$. Then $\lambda_2 = - \tfrac{1}{2} \lambda_1 \in \rl_+$. Note furthermore, that we cannot have any zero Chern numbers (since both eigenvalues are at the ``ends''), thus $r_1 = r_1^-$ and $r_2 = r_2^+$. Because of that, we must have three Chern numbers, $k_{1, 1}^-, k_{2, 1}^+, k_{2, 2}^+ \in \Z_+$, so that $m_1 = k_{1, 1}^- = k_{2, 1}^+ + k_{2, 2}^+$. In summary, the Nahm data consists of a solution of Nahm's \cref{eq:nahm}, $\( T_1, T_2, T_3 \)$, on the single interval, $\( \lambda_1, \lambda_2 \)$, with the extra requirement that for all $t \in \( \lambda_1, \lambda_2 \)$ and $\alpha \in \{ 1, 2, 3 \}$, we have
\begin{equation}
   T_\alpha \( t \) = \tfrac{\rho_\alpha^-}{2 \( \lambda_1 - t \)} + \tfrac{\rho_\alpha^+}{2 \( \lambda_2 - t \)} + \cR_\alpha (t),
\end{equation}
where
\begin{itemize}
   \item The map
      \begin{equation}
         \widehat{\rho}^- \colon \su (2) \rightarrow \u \( m_1 \); \ x_1 \sigma_1 + x_2 \sigma_2 + x_3 \sigma_3 \mapsto x_1 \rho_1^- + x_2 \rho_2^- + x_3 \rho_3^-,
      \end{equation}
      is an irreducible representation.
   \item The map
      \begin{equation}
         \widehat{\rho}^\pm \colon \su (2) \rightarrow \u \( m_1 \); \ x_1 \sigma_1 + x_2 \sigma_2 + x_3 \sigma_3 \mapsto x_1 \rho_1^+ + x_2 \rho_2^+ + x_3 \rho_3^+,
      \end{equation}
      is a reducible representation that decomposes to a $k_{2, 1}^+ \oplus k_{2, 2}^+$-dimensional irreducible components.
   \item The function $\cR_\alpha$ extends continuously to $\left[ \lambda_1, \lambda_2 \right]$.
\end{itemize}

\smallskip

\begin{remark}
   When $k_{1, 1}^- = m_1 = 2$ and $k_{2, 1}^+ = k_{2, 2}^+ = 1$, then in the above construction, for all $\alpha \in \{ 1, 2, 3 \}$ and in some gauge, we have that $\rho_\alpha^- = \sigma_\alpha$ and $\rho_\alpha^+ = \begin{pmatrix} 0 & 0 \\ 0 & 0 \end{pmatrix}$.

   In particular, $\( T_1 (t), T_2 (t), T_3 (t) \)$ converges as $t \rightarrow \lambda_2$ from the left. This moduli space was first studied extensively by Dancer in \cites{Dancer-NahmHyperKahler,Dancer-SU3monopoles} and further with Leese in \cites{Dancer-Leese-dynamicsSU3monopoles,Dancer-Leese-numerical}.
\end{remark}



\bigskip

\begin{acknowledgment}
   The first author acknowledges the support of the Natural Sciences and Engineering Research Council of Canada (NSERC), RGPIN-2019-04375.

   The authors are thankful to Mark Stern for the many illuminating discussions about gauge theory, in general, and this project, in particular, that have greatly contributed to the completion of this paper. They also wish to thank Sergey Cherkis, Jacques Hurtubise, Gon\c{c}alo Oliveira, and Thomas Walpuski for their feedback and insight.

   While working on various parts of this project, the second author also enjoyed the hospitality of the University of Waterloo, the Perimeter Institute, the Fields Institute, and Duke University.

   Both authors enjoyed opportunities to advance the research of this paper at the conference ``Geometric and analytic aspects of moduli spaces'' in Hannover in 2019, and at the BIRS online workshop ``Geometry, Analysis, and Quantum Physics of Monopoles'' in 2021.
\end{acknowledgment}

   \bibliography{Bib-Benoit,Bib-monopoles,references}

\end{document}